\newtheorem{thm}{Theorem}
\newtheorem{lemma}[thm]{Lemma}
\newcommand{\R}{\mathbb{R}}
\newcommand{\Z}{\mathbb{Z}}
\newcommand{\E}{\mathbb{E}}
\newcommand{\T}{\mathbb{T}}
\newcommand{\V}{\mathcal{V}}
\newcommand{\C}{\mathbb{C}}
\newcommand{\inprod}[2]{\left\langle #1, #2 \right\rangle}
\renewcommand{\P}{\mathbb{P}}
\renewcommand{\L}{\mathcal{L}}
\newcommand{\I}{\mathbb{I}}
\newcommand{\var}{\mathrm{Var}}
\newcommand{\hess}{\mathrm{Hess\,}}
\newcommand{\tr}{\mathrm{Tr\,}}
\newcommand{\n}{\mathfrak{N}}
\newcommand{\Hess}{\mathrm{Hess\,}}
\renewcommand{\O}{\mathcal{O}}
\newcommand{\U}{\mathcal{U}}
\newcommand{\dv}{d\overline{\rm vol}}
\begin{document}

\title{On Stein's method for multivariate normal approximation}
\author{Elizabeth S.\ Meckes}
\address{Department of Mathematics\\Case Western Reserve University\\
10900 Euclid Ave.\\Cleveland, OH 44106.}
\email{ese3@cwru.edu}
\urladdr{http://case.edu/artsci/math/esmeckes/}
\begin{abstract}The purpose of this paper is to synthesize the approaches taken by 
Chatterjee-Meckes and Reinert-R\"ollin in adapting Stein's method of exchangeable pairs
for multivariate normal approximation.  The more general linear regression condition of 
Reinert-R\"ollin allows for wider applicability of the method, while the method of bounding the
solution of the Stein equation due to Chatterjee-Meckes allows for improved convergence rates.  
Two abstract normal approximation theorems are proved, one for use when the underlying symmetries
of the random variables are discrete, and one for use in contexts in which continuous symmetry
groups are present.  The application to runs on the line from Reinert-R\"ollin is reworked to 
demonstrate the improvement in convergence rates, and a new application to joint value distributions
of eigenfunctions of the Laplace-Beltrami operator on a compact Riemannian manifold is 
presented.
\end{abstract}
\maketitle

\section{Introduction}

In 1972, Charles Stein \cite{Ste1} introduced a powerful new method for
estimating the distance from a probability distribution on $\R$ to a Gaussian
distribution.  Central to the method was the notion of a characterizing 
operator: Stein observed that the standard normal distribution was the 
unique probability distribution $\mu$ with the property that 
$$\int\big[f'(x)-xf(x)\big]\mu(dx)=0$$
for all $f$ for which the left-hand side exists and is finite.  The operator
$T_o$ defined on $C^1$ functions by 
$$T_of(x)=f'(x)-xf(x)$$
is called the characterizing operator of the standard normal distribution.
The left-inverse to $T_o$, denoted $U_o$, is defined by the equation
$$T_o(U_of)(x)=f(x)-\E f(Z),$$
where $Z$ is a standard normal random variable; the boundedness properties
of $U_o$ are an essential ingredient of Stein's method.

Stein and many other authors continued to develop this method; in 1986, Stein
published the book \cite{Ste2},
which laid out his approach to the method, called the method
of exchangeable pairs, in detail.  Stein's method has proved very useful
in situations in which local dependence or weak global dependence are
present.  One of the chief advantages of the method is that it is specifically
a method for bounding the distance from a fixed distribution to Gaussian, and
thus automatically produces concrete error bounds in limit theorems.  The
method is most naturally formulated by viewing probability measures as dual
to various classes of functions, so that the notions of distance that arise
are those which can be expressed as differences of expectations of test
functions (e.g., the total variation distance, Wasserstein distance, or
bounded Lipschitz distance).  Several authors (particularly 
Bolthausen \cite{Bol}, G\"otze \cite{Got},
Rinott and Rotar \cite{RinRot}, and Shao and Su \cite{ShaSu}) 
have extended the method to non-smooth
test functions, such as indicator functions of intervals in $\R$ and
indicator functions of convex sets in $\R^k$.

Heuristically, the univariate method of exchangeable pairs goes as follows.
Let $W$ be a random variable conjectured to be approximately Gaussian; 
assume that $\E W=0$ and $\E W^2=1.$  From $W$, construct a new random variable
$W'$ such that the pair $(W,W')$ has the same distribution as $(W',W)$.  
This is usually done by making a ``small random change'' in $W$, so that 
$W$ and $W'$ are close.  Let $\Delta=W'-W$.  If it can be verified that
there is a $\lambda>0$ such that 
\begin{equation}\label{lin-cond-univ}
\E\left[\Delta\big|W\right]=-\lambda W+E_1,
\end{equation}
\begin{equation}
\E\left[\Delta^2\big|W\right]=2\lambda+E_2,
\end{equation}
\begin{equation}
\E\left[\Delta\big|W\right]=E_3,
\end{equation}
with the random quantities $E_1,E_2,E_3$ being small compared to $\lambda$,
then $W$ is indeed approximately Gaussian, and its distance to Gaussian
(in some metric) can be bounded in terms of the $E_i$ and $\lambda$.

While there had been successful uses of multivariate versions of Stein's
method for normal approximation in the years following the introduction of 
the univariate method 
(e.g., by G\"otze \cite{Got}, Rinott and Rotar \cite{RinRot}, \cite{RinRot2}, 
and Rai{\v{c}} \cite{Rai}), there had not until recently been a version of
the method of exchangeable pairs for use in a multivariate setting.  This was first addressed in
joint work by the author with S. Chatterjee \cite{MecCha}, where several abstract normal approximation
theorems, for approximating by standard Gaussian random vectors, were proved.  The theorems 
were applied to estimate the rate of convergence in
 the multivariate central limit theorem and to show that rank $k$ projections of Haar measure on
 the orthogonal group $\O_n$ and the unitary group $\U_n$ are close to Gaussian measure on
 $\R^k$ (respectively $\C^k$), when $k=o(n)$.  The condition in the theorems of \cite{MecCha}
 corresponding to condition \eqref{lin-cond-univ} above was that, for an exchangeable pair of
 random vectors $(X,X')$,
 \begin{equation}\label{lin-cond-multi}
 \E\left[X'-X\big|X\right]=-\lambda X.
 \end{equation}
 The addition of a random error to this equation was not needed in the applications in \cite{MecCha},
 but is a straightforward modification of the theorems proved there.
 
 After the initial draft of \cite{MecCha} appeared on the ArXiv, a preprint was posted by 
 Reinert and R\"ollin \cite{ReiRol} which generalized one of the abstract normal approximation
 theorems of \cite{MecCha}.  Instead of condition \eqref{lin-cond-multi} above, they required
 \begin{equation}\label{lin-cond-matrix}
 \E\left[X'-X\big|X\right]=-\Lambda X+E,
 \end{equation}
 where $\Lambda$ is a positive definite matrix and $E$ is a random error.  This more general
 condition allowed them to estimate the distance to Gaussian random vectors with
 non-identity (even singular) covariance matrices.  They then introduced an insightful new method,
``the embedding method" for approximating real random variables by the normal distribution, by
 observing that in many cases in which the condition \eqref{lin-cond-univ} does not hold, the
 random variable in question can be viewed as one component of a random vector which 
 satisfies condition \eqref{lin-cond-matrix} with a non-diagonal $\Lambda$.  Many examples are
 given, both of the embedding method and the multivariate normal approximation theorem
 directly, including applications to runs on the line, statistics of Bernoulli random graphs, U-statistics,
 and doubly-indexed permutation statistics.
 
 After \cite{ReiRol} was posted, \cite{MecCha} underwent significant revisions, largely to change the
 metrics which were used on the space of probability measures on $\R^k$ and $\C^k$.  As 
 mentioned above, Stein's method works most naturally to compare measures by using (usually
 smooth) classes of test functions.  The smoothness conditions used by Reinert and R\"ollin, and
 those initially used in \cite{MecCha}, are to assume bounds on the quantities
 $$|h|_r:=\sup_{1\le i_1,\ldots,i_r\le k}\left\|\frac{\partial^r h}{\partial x_{i_1}\cdots\partial x_{i_r}}
 \right\|_{\infty}.$$
 The approach taken in the published version of \cite{MecCha} is to give smoothness conditions
 instead by requiring bounds on the quantities
 $$M_r(h):=\sup_{x\in\R^k}\|D^rh(x)\|_{op},$$
 where $\|D^rh(x)\|_{op}$ is the operator norm of the $r$-th derivative of $h$, as an $r$-linear form.
 These smoothness conditions seem preferable for several reasons.  Firstly, they are more 
 geometrically natural, as they are coordinate-free; they depend only on distances and not on
 the choice of orthonormal basis of $\R^k$.  Particularly when approximating by the standard 
 Gaussian distribution on $\R^k$, which is of course rotationally invariant, it seems desirable to have
 a notion of distance which is also rotationally invariant.  In more practical terms, considering
 classes of functions defined in terms of bounds on the quantities $M_r$ and modifying the
 proofs of the abstract theorems accordingly allows for improved error bounds.  The original 
bound on the Wasserstein distance from a $k$-dimensional projection of Haar
 measure on $\O_n$ to standard Gauss measure from the first version of  \cite{MecCha}  was
 $c\frac{k^{3/2}}{n}$, while the coordinate-free viewpoint allowed the bound to  be improved to
 $c\frac{k}{n}$ (in the same metric).  In Section \ref{examples} below, the example of runs on
 the line from \cite{ReiRol} is reworked with this viewpoint, with essentially the same ingredients,
 to demonstrate that the rates of convergence obtained are improved.  Finally, most of the bounds in
 \cite{MecCha} and below, and those from the main theorem in \cite{ReiRol} require two or 
 three derivatives, so that an additional smoothing argument is needed to move to one of the more
 usual metrics on probability measures (e.g. Wasserstein distance, total variation distance, or bounded
 Lipschitz distance).  Starting from bounds in terms of the $M_r(h)$ instead of the $|h|_r$
 typically produces better results in the final metric; compare, e.g., Proposition 3.2 of the original ArXiv 
 version of the paper \cite{MMec} of M. Meckes with Corollary 3.5 of the published version, in which 
 one of the abstract approximation theorems of 
 \cite{MecCha} was applied to the study of the distribution of marginals of the uniform measure on
 high-dimensional convex bodies.
 
 The purpose of this paper is to synthesize the approaches taken by the author and Chatterjee in
 \cite{MecCha} and Reinert and R\"ollin in \cite{ReiRol}.  In Section \ref{theorems}, two preliminary
 lemmas are proved, identifying a characterizing operator for the Gaussian distribution on $\R^k$
 with covariance matrix $\Sigma$ and bounding the derivatives of its left-inverse in terms of the
 quantities $M_r$.  Then, two abstract normal approximation theorems are proved.  The first is a
 synthesis of Theorem 2.3 of \cite{MecCha} and Theorem 2.1 of \cite{ReiRol}, in which the 
 distance from $X$ to a Gaussian random variable with mean zero and covariance $\Sigma$ is bounded, for $X$ the 
 first member of an exchangeable pair $(X,X')$ satisfying condition \eqref{lin-cond-matrix} above.
 The second approximation theorem is analogous to Theorem 2.4 of \cite{MecCha}, and is for situations
 in which the underlying random variable possesses ``continuous symmetries".  A condition 
 similar to \eqref{lin-cond-matrix} is used in that theorem as well.  Finally, in Section \ref{examples}, 
 two applications are carried out.  The first is simply a reworking of the runs on the line example of
 \cite{ReiRol}, making use of their analysis together with Theorem \ref{abstract} below to obtain a
 better rate of convergence.  The second application is to the joint value distribution of a finite
 sequence of orthonormal eigenfunctions of the Laplace-Beltrami operator on a compact Riemannian
 manifold.  This is a multivariate version of the main theorem of \cite{Mec1}.  As an example, the
 error bound of this theorem is computed explicitly for a certain class of flat tori.

\subsection{Notation and conventions}

The Wasserstein distance 
$d_{W}(X,Y)$ between the random variables $X$ and $Y$ is defined by
$$d_{W}(X,Y)=\sup_{M_1(g)\le1}\big|\E g(X)-\E g(Y)\big|,$$
where $M_1(g)=\sup_{x\neq y}\frac{|g(x)-g(y)|}{|x-y|}$ is the Lipschitz 
constant of $g$.  
  On the space of 
probability distributions
with finite absolute first moment, Wasserstein
distance induces a stronger topology than the usual 
one described by weak convergence, but not as strong as 
the topology induced by the total variation distance.  See \cite{dud} for 
detailed discussion of the various notions of distance between probability
distributions.

We will use $\n(\mu,\Sigma)$ to denote the normal distribution on $\R^k$ with 
mean $\mu$ and covariance matrix $\Sigma$; unless otherwise stated, the 
random variable $Z=(Z_1,\ldots,Z_k)$ is  
understood to be a standard Gaussian random vector on $\R^k$.  

In $\R^n$, the Euclidean inner product is denoted $\inprod{\cdot}{\cdot}$ and
the Euclidean norm is denoted $|\cdot|$.  
On the space of real $n\times n$ matrices, 
the Hilbert-Schmidt inner
product is defined by
$$\inprod{A}{B}_{H.S.}=\tr(AB^T),$$
with corresponding norm
$$\|A\|_{H.S.}=\sqrt{\tr(AA^T)}.$$
The operator norm of a matrix $A$ over $\R$ is defined by
$$\|A\|_{op}=\sup_{|v|=1,|w|=1}|\inprod{Av}{w}|.$$
More generally, if $A$ is a $k$-linear form on $\R^n$, the operator norm
of $A$ is defined to be 
$$\|A\|_{op}=\sup\{|A(u_1,\ldots,u_k)|:|u_1|=\cdots=|u_n|=1 \}.$$
The $n\times n$ identity matrix is denoted $I_n$ and the 
$n\times n$ matrix of all zeros is denoted $0_n$.

For $\Omega$ a domain in $\R^n$, the notation 
$C^k(\Omega)$ will be used for the space of $k$-times 
continuously differentiable real-valued functions on $\Omega$, and 
$C^k_o(\Omega)\subseteq C^k(\Omega)$ are those $C^k$ functions on 
$\Omega$ with compact support.  The $k$-th derivative $D^kf(x)$ of 
a function $f\in C^k(\R^n)$ is a $k$-linear form on $\R^n$, given in
coordinates by
$$\inprod{D^kf(x)}{(u_1,\ldots,u_k)}=\sum_{i_1,\ldots,i_k=1}^n \frac{\partial^k
f}{\partial x_{i_1}\cdots\partial x_{i_k}}(x)(u_1)_{i_1}\cdots(u_k)_{i_k},$$
where $(u_i)_j$ denotes the $j$-th component of the vector $u_i$.  For
an intrinsic, coordinate-free developement, see Federer \cite{Fed}.
For $f:\R^n\to\R$, sufficiently smooth, let
\begin{equation}\label{Mkdef}
M_k(f):=\sup_{x\in\R^n}\|D^kf(x)\|_{op}.
\end{equation}
In the case $k=2$, define 
\begin{equation}\label{Mtildedef}
\widetilde M_2(f):=\sup_{x\in\R^n}\|\Hess f(x)\|_{H.S.}.
\end{equation}
Note also that
$$M_k(f)=\sup_{x\neq y}\frac{\|D^{k-1}f(x)-D^{k-1}f(y)\|_{op}}{|x-y|};$$
that is, $M_k(f)$ is the Lipschitz constant of the $k-1$-st derivative of
$f$.

This general definition of $M_k$ is a departure from what was done by
Rai{\v{c}} in
\cite{raic}; there, smoothness conditions on functions are also given
in coordinate-independent ways, and 
$M_1$ and $M_2$ are defined as they are here, but in  
case $k=3$, the quantity $M_3$
is defined as the Lipschitz constant of the Hessian with respect to
the Hilbert-Schmidt norm as opposed to the operator norm.

\section{Abstract Approximation Theorems}\label{theorems}
This section contains the basic lemmas giving the Stein characterization of
the multivariate Gaussian distribution and bounds to the solution of the 
Stein equation, together with two multivariate abstract normal approximation
theorems and their proofs.  The first theorem is a reworking of the theorem
of Reinert and R\"ollin on multivariate normal approximation with the 
method of exchangeable pairs for vectors with non-identity covariance.  The
second is an analogous result in the context of ``continuous symmetries'' of
the underlying random variable, as has been previously studied by the author
in \cite{Mec2}, \cite{Mec1}, and (jointly with S. Chatterjee) in \cite{MecCha}.

\medskip

 The following lemma gives a second-order
characterizing operator for the Gaussian distribution with mean
$0$ and covariance $\Sigma$ on $\R^d$.   The characterizing operator for this distribution is
already well-known.  The proofs available in the literature generally rely on viewing the Stein 
equation in terms of the generator of the Ornstein-Uhlenbeck semi-group; the proof given here is
direct.
\begin{lemma}\label{char}
Let $Z\in\R^d$ be a random vector with $\{Z_i\}_{i=1}^d$ independent, 
identically distributed standard Gaussian random variables, and let $Z_\Sigma=
\Sigma^{1/2}Z$ for a symmetric, non-negative definite matrix $\Sigma$.  
\begin{enumerate}
\item \label{char1}
If $f:\R^d\to\R$ is two times continuously differentiable and 
compactly supported, then 
$$\E\big[\inprod{\hess f(Z_\Sigma)}{\Sigma}_{H.S.}-\inprod{Z_\Sigma}{
\nabla f(Z_\Sigma)}\big]=0.$$

\item \label{char2}If $Y\in\R^d$ is a random vector such that 
$$\E\big[\inprod{\hess f(Y)}{\Sigma}_{H.S.}-\inprod{Y}{\nabla f(Y)}\big]=0$$
for every $f\in C^2(\R^d)$ with $\E\big|\inprod{\hess f(Y)}{\Sigma}_{H.S.}
-\inprod{Y}{\nabla f(Y)}\big|<\infty$, then $\L(Y)=\L(Z_\Sigma)$.  

\item \label{sol}If $g\in C^\infty(\R^d)$, then the function
\begin{equation}\label{U_o}
U_og(x):=\int_0^1\frac{1}{2t}\big[\E g(\sqrt{t}x+\sqrt{1-t}Z_\Sigma)-
\E g(Z_\Sigma)\big]
dt\end{equation}
is a solution to the differential equation
\begin{equation}\label{diffeq}
\inprod{x}{\nabla h(x)}-\inprod{\hess h(x)}{\Sigma}_{H.S.}=g(x)-\E g(Z_\Sigma).
\end{equation}
\end{enumerate}

\end{lemma}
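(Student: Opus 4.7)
My plan is to prove the three assertions in the order (1), (3), (2), since (2) falls out of (3) essentially for free.

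For (1), assume first that $\Sigma$ is strictly positive definite, so $Z_\Sigma$ has density $p(x) = (2\pi)^{-d/2}(\det\Sigma)^{-1/2}\exp(-\inprod{x}{\Sigma^{-1}x}/2)$. The crucial identity is $xp(x) = -\Sigma\nabla p(x)$. Substituting it into $\E\inprod{Z_\Sigma}{\nabla f(Z_\Sigma)} = \int\inprod{x}{\nabla f(x)}p(x)\,dx$ and integrating by parts in each coordinate (boundary terms vanish by compact support of $f$) yields $\sum_{i,j}\Sigma_{ij}\int(\partial_i\partial_j f)(x)p(x)\,dx = \E\inprod{\Hess f(Z_\Sigma)}{\Sigma}_{H.S.}$, as required. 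The degenerate case follows by applying what has just been shown to $\Sigma+\epsilon I$ and letting $\epsilon\downarrow 0$, using dominated convergence with $f$ and its derivatives bounded.

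The heart of the argument is (3). Set $\phi_x(t):=\E g(\sqrt t\,x+\sqrt{1-t}\,Z_\Sigma)$. Direct differentiation under the expectation, followed by the Gaussian integration-by-parts formula $\E[(Z_\Sigma)_iF(Z_\Sigma)] = \sum_j\Sigma_{ij}\E\,\partial_jF(Z_\Sigma)$ applied to the $Z_\Sigma$-term, converts the $1/\sqrt{1-t}$ factor into a second derivative and yields
\begin{equation*}
\phi_x'(t) = \frac{1}{2\sqrt t}\inprod{x}{\E\nabla g(\sqrt t\,x+\sqrt{1-t}\,Z_\Sigma)} - \tfrac12\E\inprod{\Hess g(\sqrt t\,x+\sqrt{1-t}\,Z_\Sigma)}{\Sigma}_{H.S.}.
\end{equation*}
Differentiating \eqref{U_o} under the integral sign gives $\nabla U_og(x) = \int_0^1\frac{1}{2\sqrt t}\E\nabla g(\sqrt t\,x+\sqrt{1-t}\,Z_\Sigma)\,dt$ and $\Hess U_og(x) = \int_0^1\tfrac12\E\Hess g(\sqrt t\,x+\sqrt{1-t}\,Z_\Sigma)\,dt$. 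Consequently the left-hand side of \eqref{diffeq} evaluated at $h=U_og$ telescopes to $\int_0^1\phi_x'(t)\,dt = \phi_x(1)-\phi_x(0) = g(x) - \E g(Z_\Sigma)$, proving (3).

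Part (2) is then almost immediate: for any smooth, compactly supported $g$, set $h:=U_og$; the hypothesis on $Y$ applied to $h$, combined with \eqref{diffeq}, gives $\E g(Y) = \E g(Z_\Sigma)$, and since $C^\infty_o(\R^d)$ determines the distribution this forces $\L(Y)=\L(Z_\Sigma)$. The main technical obstacle throughout is the singularity of $1/(2t)$ at $t=0$ in \eqref{U_o}: this is dissolved by the observation that $\phi_x(t)-\phi_x(0) = \int_0^t\phi_x'(s)\,ds = O(t)$ uniformly on compact sets when $g$ is smooth with controlled growth (with analogous estimates for derivatives in $x$), so the integrand in \eqref{U_o} is bounded near $0$ and the differentiation under the integral and interchanges of order of integration invoked above are all legitimate. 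For (2) one must in addition check that $h=U_og$ lies in the class of admissible test functions in the hypothesis, which follows at once from the integral representations of $\nabla U_og$ and $\Hess U_og$ when $g\in C^\infty_o(\R^d)$.
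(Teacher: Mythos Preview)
Your approach is essentially the same as the paper's: part (1) by Gaussian integration by parts, part (3) by computing $\frac{d}{dt}\E g(Z_{x,t})$, converting the $Z_\Sigma$-term via integration by parts, and matching against the derivatives of $U_og$, and part (2) as a corollary of (3). The paper is terser on (1) (one line) and handles the integrability at $t=0$ in (3) by a direct Lipschitz estimate rather than via $\phi_x'$.

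One technical slip to flag: your claim that $\phi_x(t)-\phi_x(0)=O(t)$ and hence that the integrand in \eqref{U_o} is \emph{bounded} near $0$ is not correct in general. Since $\phi_x'(s)\sim \tfrac{1}{2\sqrt{s}}\inprod{x}{\E\nabla g(Z_\Sigma)}$ as $s\downarrow 0$, one only gets $\phi_x(t)-\phi_x(0)=O(\sqrt{t})$, so the integrand behaves like $O(t^{-1/2})$ near $0$. This is still integrable, which is all you need, so the argument survives; but ``bounded'' should be replaced by ``integrable.'' The paper avoids the issue by estimating $|\E g(\sqrt{t}x+\sqrt{1-t}Z_\Sigma)-\E g(Z_\Sigma)|\le L\bigl[\sqrt{t}\,|x|+t\sqrt{\tr\Sigma}\bigr]$ directly from the Lipschitz constant of $g$, which gives the same $O(t^{-1/2})$ integrand without passing through $\phi_x'$.
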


\begin{proof}
Part \eqref{char1} follows from integration by parts.  

Part \eqref{char2} follows easily from part \eqref{sol}: note that if 
$\E\big[\inprod{\hess f(Y)}{\Sigma}_{H.S.}-\inprod{Y}{\nabla 
f(Y)}\big]=0$
for every $f\in C^2(\R^d)$ with $\E\big|\inprod{\hess f(Y)}{\Sigma}_{H.S.}
-\inprod{Y}{\nabla 
f(Y)}\big|<\infty$, then for $g\in C_o^\infty$ given,
$$\E g(Y)-\E g(Z)=\E\big[\inprod{\hess (U_og)(Y)}{\Sigma}_{H.S.}
-\inprod{Y}{\nabla (U_og)(Y)}\big]=0,$$
and so $\L(Y)=\L(Z)$ since $C^\infty$ is dense in the class of bounded
continuous functions, with respect to the supremum norm.

For part \eqref{sol}, first note that since $g$ is Lipschitz, if 
$t\in\left(0,1\right)$
\begin{equation*}\begin{split}
\left|\frac{1}{2t}\big[\E g(\sqrt{t}x+\sqrt{1-t}\Sigma^{1/2}Z)-
\E g(\Sigma^{1/2}Z)\big]\right|&\le
\frac{L}{2t}\E\left|\sqrt{t}x+(\sqrt{1-t}-1)\Sigma^{1/2}Z\right|\\
&\le\frac{L}{2t}\left[\sqrt{t}|x|+t\sqrt{\tr(\Sigma)}\right],
\end{split}\end{equation*}
which is integrable on $\left(0,1\right)$, 
so the integral exists by the dominated
convergence theorem.  

  To show that $U_og$ is indeed a solution
to the differential equation (\ref{diffeq}), let $$Z_{x,t}=\sqrt{t}x+
\sqrt{1-t}\Sigma^{1/2}Z$$
and observe that
\begin{eqnarray*}
g(x)-\E g(\Sigma^{1/2}Z)&=&\int_0^1\frac{d}{dt}\E g(Z_{x,t})dt\\
&=&\int_0^1\frac{1}{2\sqrt{t}}\E(x\cdot\nabla g(Z_t))dt-\int_0^1\frac{1}
{2\sqrt{1-t}}\E\inprod{\Sigma^{1/2}Z}{\nabla g(Z_t)}dt\\&=&
\int_0^1\frac{1}{2\sqrt{t}}\E(x\cdot\nabla g(Z_t))dt-\int_0^1\frac{1}
{2}\E\inprod{\hess g(Z_t)}{\Sigma}_{H.S.}dt
\end{eqnarray*}
by integration by parts.  
Noting that 
\begin{equation*}
\hess(U_og)(x)=\int_0^1\frac{1}{2}\E\big[\hess g(Z_t)\big]dt
\end{equation*}
and
\begin{equation*}
x\cdot\nabla(U_og)(x)=\int_0^1\frac{1}{2\sqrt{t}}\E(x\cdot\nabla g(Z_t))dt
\end{equation*}
completes part \ref{sol}.

\end{proof}

The next lemma gives useful bounds on $U_og$ and its derivatives
in terms of $g$ and its derivatives.  
As in \cite{raic},  bounds are most 
naturally given
in terms of the 
quantities $M_i(g)$ defined in the introduction.
\begin{lemma}\label{bounds}
For $g:\R^d\to\R$ given, $U_og$ satisfies the following bounds:
\begin{enumerate}
\item \label{genbd}$$M_k(U_og)\le\frac{1}{k}M_k(g)\qquad\forall k\ge1.$$

\item \label{Hessbd1}$$\widetilde{M}_2(U_og)\le \frac{1}{2}
\widetilde{M}_2(g).$$

\hspace{-.5in}If, in addition, $\Sigma$ is positive definite, then

\bigskip

\item\label{gradbd}$$M_1(U_og)\le M_o(g)\|\Sigma^{-1/2}\|_{op}\sqrt{\frac{
\pi}{2}}.$$

\item \label{Hessbd2}$$\widetilde{M}_2(U_og)\le \sqrt{\frac{2}{
\pi}}M_1(g)\|\Sigma^{-1/2}\|_{op}.$$

\item\label{M3bd}$$M_3(U_og)\le\frac{\sqrt{2\pi}}{4}M_2(g)\|\Sigma^{-1/2}\|_{
op}.$$

\end{enumerate}
\end{lemma}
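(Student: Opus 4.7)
The plan is to differentiate the integral representation~\eqref{U_o} of $U_og$ directly and, when $\Sigma$ is positive definite, to perform a single Gaussian integration by parts to shift one derivative off of $g$. Setting $Z_{x,t} = \sqrt{t}\,x + \sqrt{1-t}\,Z_\Sigma$ and differentiating under the integral---the chain rule brings down a factor $t^{k/2}$ at the $k$-th derivative---one finds
$$D^k U_og(x) = \int_0^1 \frac{t^{(k-2)/2}}{2}\,\E[D^k g(Z_{x,t})]\,dt$$
for every $k \ge 1$. The $M_k$ bound then follows by estimating the integrand in operator norm by $M_k(g)$ and evaluating $\int_0^1 t^{(k-2)/2}/2\,dt = 1/k$; the first $\widetilde{M}_2$ bound is the $k=2$ case with the Hilbert--Schmidt norm in place of the operator norm, and the integral reduces to $\tfrac12$.

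For the three bounds requiring $\Sigma$ positive definite, the main tool is the Gaussian integration-by-parts identity
$$\E[\partial_j \phi(Y)] = \E[\phi(Y)(\Sigma^{-1}Y)_j] \qquad \bigl(Y \sim \n(0,\Sigma)\bigr),$$
applied coordinate-wise to $\phi(y) = D^{k-1}g(\sqrt{t}\,x + \sqrt{1-t}\,y)$, which trades one derivative of $g$ for a factor $(1-t)^{-1/2}(\Sigma^{-1}Y)_j$. For the $M_1$ bound ($k=1$) this converts the gradient formula into
$$\nabla U_og(x) = \int_0^1 \frac{1}{2\sqrt{t(1-t)}}\,\E[g(Z_{x,t})\,\Sigma^{-1}Y]\,dt;$$
for unit $v$, the random variable $\inprod{v}{\Sigma^{-1}Y}$ is Gaussian with variance $v^T\Sigma^{-1}v \le \|\Sigma^{-1/2}\|_{op}^2$, so $\E|\inprod{v}{\Sigma^{-1}Y}|\le\sqrt{2/\pi}\,\|\Sigma^{-1/2}\|_{op}$, and with $|g|\le M_o(g)$ and $\int_0^1 dt/(2\sqrt{t(1-t)}) = \pi/2$ this produces the prefactor $\sqrt{\pi/2}$. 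The $M_3$ bound ($k=3$) is identical in structure: one IBP turns the third-derivative formula into
$$D^3 U_og(x)(u,v,w) = \int_0^1 \frac{\sqrt{t}}{2\sqrt{1-t}}\,\E[\hess g(Z_{x,t})(u,v)\,\inprod{w}{\Sigma^{-1}Y}]\,dt,$$
and bounding $|\hess g(u,v)|\le M_2(g)$ together with $\int_0^1 \sqrt{t}/(2\sqrt{1-t})\,dt = \pi/4$ delivers the constant $\sqrt{2\pi}/4$.

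The second $\widetilde{M}_2$ bound is the $k=2$ analog: one Gaussian IBP applied to the Hessian formula yields
$$\hess U_og(x) = \int_0^1 \frac{1}{2\sqrt{1-t}}\,\E\bigl[\Sigma^{-1}Y\,(\nabla g(Z_{x,t}))^T\bigr]\,dt,$$
and the step I anticipate will be the most delicate is extracting the factor $\|\Sigma^{-1/2}\|_{op}$ rather than the cruder $\sqrt{\tr(\Sigma^{-1})}$ that a direct application of $\|uv^T\|_{H.S.} = |u||v|$ followed by $\E|\Sigma^{-1}Y|\le(\tr\Sigma^{-1})^{1/2}$ would produce. The plan is to pass through the duality $\|\hess U_og(x)\|_{H.S.} = \sup_{\|A\|_{H.S.}\le 1}\inprod{\hess U_og(x)}{A}_{H.S.}$; for each such test matrix $A$, bounding the resulting scalar $(A^T\nabla g)^T\Sigma^{-1}Y$ via the one-dimensional Gaussian estimate $\E|w^T\Sigma^{-1}Y|\le\sqrt{2/\pi}\,\|\Sigma^{-1/2}\|_{op}|w|$ for deterministic $w$, together with the uniform bound $|A^T\nabla g|\le M_1(g)\|A\|_{op}\le M_1(g)\|A\|_{H.S.}$ and $\int_0^1 dt/(2\sqrt{1-t})=1$, should deliver the stated prefactor.
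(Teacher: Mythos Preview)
Your approach mirrors the paper's throughout: differentiate the integral representation to obtain \eqref{genbd} and \eqref{Hessbd1}, then perform one Gaussian integration by parts for \eqref{gradbd}, \eqref{Hessbd2}, and \eqref{M3bd}. (For \eqref{M3bd} the paper works via the Lipschitz constant of $\hess U_og$ rather than writing out $D^3U_og$, but the content is the same.) Your treatment of \eqref{gradbd} is in fact tidier than the paper's: by pairing $\nabla U_og(x)$ with a fixed unit vector $v$ before estimating, the quantity $\inprod{v}{\Sigma^{-1}Y}=\inprod{\Sigma^{-1/2}v}{Z}$ is genuinely a scalar Gaussian and the bound $\sqrt{2/\pi}\,\|\Sigma^{-1/2}\|_{op}$ is immediate.

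There is, however, a real gap in your plan for \eqref{Hessbd2}. After duality and the integration by parts you must control $\E\bigl|\inprod{A\nabla g(Z_{x,t})}{\Sigma^{-1/2}Z}\bigr|$, and you propose to invoke the one-dimensional estimate $\E|\inprod{w}{\Sigma^{-1/2}Z}|\le\sqrt{2/\pi}\,\|\Sigma^{-1/2}\|_{op}|w|$ with $w=A\nabla g(Z_{x,t})$. But this $w$ depends on $Z$ through $Z_{x,t}$, so the scalar-Gaussian bound does not apply. The paper's own route here---first bounding $|\nabla g|\le M_1(g)$ and then asserting $\E|A^T\Sigma^{-1/2}Z|\le\sqrt{2/\pi}\,\|A^T\Sigma^{-1/2}\|_{op}$ ``as above''---has the same defect: $A^T\Sigma^{-1/2}Z$ is a vector, not a scalar, and that inequality already fails for $A=\Sigma=I_d$ with $d\ge2$. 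What either factorization yields rigorously is $\E|A^T\Sigma^{-1/2}Z|\le\|\Sigma^{-1/2}A\|_{H.S.}\le\|\Sigma^{-1/2}\|_{op}\|A\|_{H.S.}$, giving $\widetilde M_2(U_og)\le M_1(g)\|\Sigma^{-1/2}\|_{op}$ without the factor $\sqrt{2/\pi}$. Indeed, for $g(x)=|x|$ and $\Sigma=I_d$ one computes $\|\hess U_og(0)\|_{H.S.}=\E|Z|/\sqrt d$, which exceeds $\sqrt{2/\pi}$ for every $d\ge2$; so the stated constant cannot be achieved, and you were right to flag this as the delicate step.
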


\medskip

{\bf Remark:} Bounds \eqref{gradbd}, \eqref{Hessbd2}, and \eqref{M3bd}
are mainly of use when $\Sigma$ has a fairly simple form, since they require
an estimate for $\|\Sigma^{-1/2}\|_{op}$.  They are also of theoretical interest,
since they show that if $\Sigma$ is non-singular, then 
the operator $U_o$ is smoothing; functions $U_og$ are
typically one order smoother than $g$.  The bounds \eqref{genbd} and 
\eqref{Hessbd1}, while not showing the smoothing behavior of $U_o$,
are useful when $\Sigma$ is complicated (or singular) and an estimate of $\|\Sigma^{-1/2}
\|_{op}$ is infeasible or impossible.

\medskip

\begin{proof}[Proof of Lemma \ref{bounds}]
Write $h(x)=U_og(x)$ and $Z_{x,t}=\sqrt{t}x+\sqrt{1-t}\Sigma^{1/2}Z$.  
Note that by the formula for $U_og$,
\begin{equation}\label{derivs}
\frac{\partial^rh}{\partial x_{i_1}\cdots\partial x_{i_r}}(x)=
\int_0^1(2t)^{-1}t^{r/2}\E\left[\frac{\partial^rg}{\partial x_{i_1}
\cdots\partial x_{i_r}}(Z_{x,t})\right]dt.
\end{equation} 
Thus
$$\inprod{D^k(U_og)(x)}{(u_1,\ldots,u_k)}=\int_0^1\frac{t^{\frac{k}{2}-1}}{2}\E
\big[\inprod{D^kg(Z_{x,t})}{(u_1,\ldots,u_k)}\big]dt$$
for unit vectors $u_1,\ldots,u_k$, and part \eqref{genbd} follows immediately.

For the second part, note that \eqref{derivs} implies that
$$\hess h(x)=\frac{1}{2}\int_0^1\E\left[\hess g(Z_{x,t})\right]dt.$$
Fix a $d\times d$ matrix $A$.  Then
\begin{equation*}\begin{split}
\left|\inprod{\hess h(x)}{A}_{H.S.}\right|&\le\frac{1}{2}\int_0^1\E
\left|\inprod{\hess g(Z_{x,t})}{A}_{H.S.}\right|dt\le\frac{1}{2}
\left(\sup_x\|\hess g(x)\|_{H.S.}\right)\|A\|_{H.S.},
\end{split}\end{equation*}
hence part \eqref{Hessbd1}.

For part \eqref{gradbd}, note that it follows by integration by parts on the 
Gaussian expectation that
\begin{equation*}\begin{split}
\frac{\partial h}{\partial x_i}(x)&=\int_0^1\frac{1}{2\sqrt{t}}\E\left[\frac{
\partial g}{\partial x_i}(\sqrt{t}x+\sqrt{1-t}\Sigma^{1/2}Z)\right]dt\\&=
\int_0^1\frac{1}{2\sqrt{t(1-t)}}\E\left[(\Sigma^{-1/2}Z)_ig(\sqrt{t}x+\sqrt{1-
t}\Sigma^{1/2}Z)\right]dt,
\end{split}\end{equation*}
thus $$\nabla h(x)=\int_0^1\frac{1}{2\sqrt{t(1-t)}}\E\left[g(Z_{x,t})
\Sigma^{-1/2}Z\right]dt,$$
and so $$M_1(h)\le\|g\|_{\infty}\E\big|\Sigma^{-1/2}Z\big|\int_0^1\frac{1}{2
\sqrt{t(1-t)}}dt.$$
Now, $\E\big|\Sigma^{-1/2}Z\big|\le\|\Sigma^{-1/2}\|_{op}\E|Z_1|=
\|\Sigma^{-1/2}\|_{op}\sqrt{\frac{2}{\pi}},$ since $\Sigma^{-1/2}Z$ is a 
univariate Gaussian random variable, and $\int_0^1\frac{1}{2\sqrt{t(1-t)}}=
\frac{\pi}{2}$.  This completes part \eqref{gradbd}.

For part \eqref{Hessbd2}, again using integration by parts on the 
Gaussian expectation,
\begin{equation}\begin{split}\label{seconds}
\frac{\partial^2h}{\partial x_i\partial x_j}(x)&=\int_0^1\frac{1}{2}\E\left[
\frac{\partial^2g}{\partial x_i\partial x_j}(\sqrt{t}x+\sqrt{1-t}\Sigma^{1/2}
Z)\right]dt\\
&=\int_0^1\frac{1}{2\sqrt{1-t}}\E\left[\big[\Sigma^{-1/2}Z\big]_i
\frac{\partial g}{\partial x_j}(Z_{x,t})
\right]dt,
\end{split}\end{equation}
and so
\begin{equation}\label{Hessian}
\hess h(x)=\int_0^1\frac{1}{2\sqrt{1-t}}\E\left[\Sigma^{-1/2}Z\left(\nabla g(
Z_{x,t})\right)^T\right]dt.\end{equation}

Fix a $d\times d$ matrix $A$.  Then
\begin{equation*}\begin{split}
\inprod{\hess h(x)}{A}_{H.S.}&=\int_0^1\frac{1}{2\sqrt{1-t}}\E\left[\inprod{
A^T\Sigma^{-1/2}Z}{\nabla g(Z_{x,t})}\right]dt,
\end{split}\end{equation*}
thus
$$\left|\inprod{\hess h(x)}{A}_{H.S.}\right|\le
M_1(g)\E|A^T\Sigma^{-1/2}Z|\int_0^1\frac{1}{2\sqrt{1-t}}dt=M_1(g)\E|A^T
\Sigma^{-1/2}Z|.$$
As above,
$$\E|A^T\Sigma^{-1/2}Z|\le\|A^T\Sigma^{-1/2}\|_{op}\sqrt{\frac{2}{\pi}}\le
\sqrt{\frac{2}{\pi}}\|\Sigma^{-1/2}\|_{op}\|A\|_{H.S.}.$$
It follows that
$$\|\hess h(x)\|_{H.S.}\le \sqrt{\frac{2}{\pi}}M_1(g)\|\Sigma^{-1/2}\|_{op}$$
for all $x\in\R^d$, hence part \eqref{Hessbd2}.

For part \eqref{M3bd}, let $u$ and $v$ be fixed vectors in $\R^d$ 
with $|u|=|v|=1.$  Then it follows from \eqref{Hessian} that 
$$\inprod{\left(\hess h(x)-\hess h(y)\right)u}{v}=\int_0^1\frac{1}{2\sqrt{1-t}
}\E\left[\inprod{\Sigma^{-1/2}Z}{v}\inprod{\nabla g(Z_{x,t})-
\nabla g(Z_{y,t})}{u}\right]dt,$$
and so
\begin{equation*}\begin{split}
\left|\inprod{(\hess h(x)-\hess h(y))u}{v}\right|&\le |x-y|\,M_2(g)\,\E|
\inprod{Z}{\Sigma^{-1/2}v}|
\int_0^1\frac{\sqrt{t}}{2\sqrt{1-t}}dt\\&=|x-y|\,M_2(g)\big|\Sigma^{-1/2}v\big|
\frac{\sqrt{2\pi}}{4}\\&\le|x-y|\,M_2(g)\big\|\Sigma^{-1/2}\big\|_{op}
\frac{\sqrt{2\pi}}{4}.
\end{split}\end{equation*}
\end{proof}

\begin{thm}\label{abstract}
Let $(X,X')$ be an exchangeable pair of random vectors in $\R^d$.  Suppose that
there is an invertible matrix $\Lambda$, a symmetric, non-negative definite
matrix $\Sigma$, a random vector $E$ and a 
random matrix $E'$ such that
\begin{enumerate}
\item \label{lincond}
$$\E\left[X'-X\big|X\right]=-\Lambda X+\E\left[E\big|X\right]$$
\item \label{quadcond}
$$\E\left[(X'-X)(X'-X)^T\big|X\right]=2\Lambda\Sigma+\E\left[E'\big|X
\right].$$
\end{enumerate}
Then for $g\in C^3(\R^d)$,
\begin{equation}\begin{split}\label{bd1}
\big|\E g(X)-\E g(\Sigma^{1/2}Z)\big|&\le\|\Lambda^{-1}\|_{op}\left[
 M_1(g)\E|E|+\frac{1}{4}\widetilde{M}_2(g)
\E\|E'\|_{H.S.}
+\frac{1}{9}M_3(g)\E|X'-X|^3\right]\\&\le\|\Lambda^{-1}\|_{op}\left[
 M_1(g)\E|E|+\frac{\sqrt{d}}{4}M_2(g)
\E\|E'\|_{H.S.}
+\frac{1}{9}M_3(g)\E|X'-X|^3\right]
,\end{split}\end{equation}
where $Z$ is a standard Gaussian random vector in $\R^d$.

If $\Sigma$ is non-singular, then for $g\in C^2(\R^d)$,
\begin{equation}\begin{split}\label{bd2}
\big|\E g(X)-\E g(\Sigma^{1/2}Z)\big|\le M_1(g)&\|\Lambda^{-1}\|_{op}\left[
\E|E|+\frac{1}{2}\|\Sigma^{-1/2}\|_{op}\E\|E'\|_{H.S.}
\right]\\
&+\frac{\sqrt{2\pi}}{24}M_2(g)\|\Sigma^{-1/2}\|_{op}\|\Lambda^{-1}\|_{op}
\E|X'-X|^3.\end{split}\end{equation}

\end{thm}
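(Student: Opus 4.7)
The plan is the standard Stein's-method attack via exchangeable pairs, using the characterizing operator provided by Lemma \ref{char} and the derivative estimates of Lemma \ref{bounds}. Setting $h := U_o g$ and applying Lemma \ref{char}\eqref{sol} yields
$$\E g(X) - \E g(\Sigma^{1/2}Z) = \E\bigl[\inprod{X}{\nabla h(X)}\bigr] - \E\bigl[\inprod{\hess h(X)}{\Sigma}_{H.S.}\bigr].$$
I would then use hypothesis \eqref{lincond} to substitute $\Lambda X = -\E[X'-X\,|\,X] + \E[E\,|\,X]$ into the first term, so that the Stein identity above becomes
$$\E\bigl[\inprod{\Lambda^{-1}E}{\nabla h(X)}\bigr] - \E\bigl[\inprod{\Lambda^{-1}(X'-X)}{\nabla h(X)}\bigr] - \E\bigl[\inprod{\hess h(X)}{\Sigma}_{H.S.}\bigr].$$
Exchangeability of $(X,X')$, applied to the second term, gives the antisymmetrization
$$-\E\bigl[\inprod{\Lambda^{-1}(X'-X)}{\nabla h(X)}\bigr] = \tfrac12\E\bigl[\inprod{\Lambda^{-1}(X'-X)}{\nabla h(X') - \nabla h(X)}\bigr].$$

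The next step is to Taylor-expand $\nabla h(X') - \nabla h(X) = \hess h(X)(X'-X) + R$ along the segment from $X$ to $X'$, with remainder $|R| \le \tfrac12 M_3(h)|X'-X|^2$ coming from the integral form of the remainder. Substituting, the leading piece becomes
$$\tfrac12\E\bigl[\inprod{\Lambda^{-1}(X'-X)}{\hess h(X)(X'-X)}\bigr] = \tfrac12\E\bigl[\inprod{\hess h(X)}{\Lambda^{-1}(X'-X)(X'-X)^T}_{H.S.}\bigr],$$
using the identity $u^T A v = \inprod{A}{vu^T}_{H.S.}$. Conditioning on $X$ and applying hypothesis \eqref{quadcond} replaces the matrix inside the Hilbert-Schmidt inner product by $\Lambda^{-1}(2\Lambda\Sigma + E') = 2\Sigma + \Lambda^{-1}E'$, producing $\E[\inprod{\hess h(X)}{\Sigma}_{H.S.}]$, which exactly cancels the Hessian term from the Stein identity. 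What remains is
$$\E g(X) - \E g(\Sigma^{1/2}Z) = \E\bigl[\inprod{\Lambda^{-1}E}{\nabla h(X)}\bigr] + \tfrac12\E\bigl[\inprod{\hess h(X)}{\Lambda^{-1}E'}_{H.S.}\bigr] + (\text{cubic Taylor remainder}).$$

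It then remains to bound the three residual terms using the operator norm and the Cauchy-Schwarz inequality for $\inprod{\cdot}{\cdot}_{H.S.}$: the linear error by $M_1(h)\|\Lambda^{-1}\|_{op}\E|E|$, the quadratic error by $\tfrac12\widetilde M_2(h)\|\Lambda^{-1}\|_{op}\E\|E'\|_{H.S.}$, and the Taylor remainder by a constant multiple of $M_3(h)\|\Lambda^{-1}\|_{op}\E|X'-X|^3$. For bound \eqref{bd1} I would insert the derivative estimates \eqref{genbd} and \eqref{Hessbd1} of Lemma \ref{bounds}, which hold for any non-negative definite $\Sigma$; the $M_2(g)$ form follows from $\widetilde M_2(g) \le \sqrt{d}\,M_2(g)$. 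For bound \eqref{bd2}, assuming $\Sigma$ non-singular, I would instead use the sharper parts \eqref{gradbd}, \eqref{Hessbd2}, and \eqref{M3bd}, which trade one derivative of $g$ for a factor $\|\Sigma^{-1/2}\|_{op}$ — this is exactly the smoothing effect of $U_o$ in the non-degenerate case.

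The main obstacle is the bookkeeping in the Hilbert-Schmidt step: one must verify that the identity $\inprod{\Lambda^{-1}u}{A v} = \inprod{A}{\Lambda^{-1}uv^T}_{H.S.}$ (for symmetric $A$) correctly converts the quadratic form in $X'-X$ into a Hilbert-Schmidt inner product with the outer-product matrix, so that conditioning on $X$ and invoking \eqref{quadcond} produces the precise cancellation with $\E[\inprod{\hess h(X)}{\Sigma}_{H.S.}]$ and leaves a clean error term involving $E'$. A minor subtlety is that \eqref{bd2} is stated for $g \in C^2$ while the argument invokes $M_3(h)$; this is legitimate because Lemma \ref{bounds}\eqref{M3bd} shows $h = U_o g$ automatically has Lipschitz Hessian when $\Sigma$ is non-singular, so no extra smoothness of $g$ is needed.
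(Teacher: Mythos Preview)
Your proposal is correct and follows essentially the same route as the paper: both arrive at the identity
\[
\E g(X)-\E g(\Sigma^{1/2}Z)=\E\Bigl[\tfrac12\inprod{\hess h(X)}{\Lambda^{-1}E'}_{H.S.}+\inprod{\nabla h(X)}{\Lambda^{-1}E}\Bigr]+(\text{cubic remainder}),
\]
the only cosmetic difference being that the paper starts from the exchangeability identity $0=\tfrac12\E\inprod{\Lambda^{-1}(X'-X)}{\nabla h(X')+\nabla h(X)}$ and works toward the Stein equation, while you start from the Stein equation and invoke exchangeability. One small correction: for \eqref{bd2} the gradient term still uses part \eqref{genbd} (giving $M_1(h)\le M_1(g)$), not part \eqref{gradbd}; only the Hessian and third-derivative terms use the smoothing bounds \eqref{Hessbd2} and \eqref{M3bd}.
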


\begin{proof}
Fix $g$, and let
$U_og$ be as in Lemma \ref{char}.  Note that it suffices to assume that
$g\in C^\infty(\R^d)$:  let $h:\R^d\to\R$ be a centered Gaussian density with
covariance matrix $\epsilon^2I_d$.  Approximate $g$ by 
$g*h$; clearly  $\|g*h-g\|_\infty\to0$ as $\epsilon
\to0$, and by Young's inequality, $M_k(g*h)\le M_k(g)$ for all $k\ge 1$.

For notational convenience, let $f=U_og$.  By the exchangeability of 
$(X,X')$, 
\begin{equation*}\begin{split}
0&=\frac{1}{2}\E\left[\inprod{\Lambda^{-1}(X'-X)}{\nabla f(X')+\nabla f(X)}
\right]\\&=\E\left[\frac{1}{2}[\inprod{\Lambda^{-1}(X'-X)}{\nabla f(X')-
\nabla f(X)}+\inprod{\Lambda^{-1}(X'-X)}{\nabla f(X)}\right]\\&=\E\left[
\frac{1}{2}\inprod{\hess f(X)}{\Lambda^{-1}(X'-X)(X'-X)^T}_{H.S.}+
\inprod{\Lambda^{-1}(X'-X)}{\nabla f(X)}+\frac{R}{2}\right],
\end{split}\end{equation*}
where $R$ is the error in the Taylor approximation.  By conditions 
\eqref{lincond} and \eqref{quadcond}, it follows that 
\begin{equation*}\begin{split}
0=\E&\left[\inprod{\hess f(X)}{\Sigma}_{H.S.}-\inprod{X}{\nabla f(X)}+
\frac{1}{2}\inprod{\hess f(X)}{\Lambda^{-1}E'}_{H.S.}
+\inprod{\nabla f(X)}{\Lambda^{-1}E}+\frac{R}{2}
\right];\end{split}\end{equation*}
that is (making use of the definition of $f$),
\begin{equation}\label{erroreq}
\E g(X)-\E g(\Sigma^{1/2}Z)=\E\left[\frac{1}{2}\inprod{\hess f(X)}{\Lambda^{-1}
E'}_{H.S.}+\inprod{\nabla f(X)}{\Lambda^{-1}E}+\frac{R}{2}\right].
\end{equation}
Next,
\begin{equation*}\begin{split}
\E\left|\frac{1}{2}\inprod{\hess f(X)}{\Lambda^{-1}E'
}_{H.S.}\right|&\le
\frac{1}{2}\left(\sup_{x\in\R^d}\|\hess f(x)\|_{H.S.}\right)
\|\Lambda^{-1}E'\|_{H.S.}\\&\le
\frac{1}{2}\left(\sup_{x\in\R^d}\|\hess f(x)\|_{H.S.}\right)
\|\Lambda^{-1}\|_{op}\|E'\|_{H.S.}\\&
\le\frac{1}{2}\|\Lambda^{-1}\|_{op}\|E'\|_{H.S.}\left(\min\left\{\frac{1}{2}\widetilde{M}_2(g),\sqrt{\frac{2}{\pi}}
M_1(g)\|\Sigma^{-1/2}\|_{op}\right\}\right),
\end{split}\end{equation*}
where the first line is by the Cauchy-Schwarz inequality, the second is by the 
standard bound $\|AB\|_{H.S.}\le\|A\|_{op}\|B\|_{H.S.},$ and the third uses 
the bounds \eqref{Hessbd1} and \eqref{Hessbd2} from Lemma \ref{bounds}.

Similarly,
\begin{equation*}\begin{split}
\E\left|\inprod{\nabla f(X)}{\Lambda^{-1}E}\right|
&\le M_1(f)\|\Lambda^{-1}\|_{op}\E\left|E\right|\\&\le 
\|\Lambda^{-1}\|_{op}\E\left|E\right|\left(\min
\left\{M_1(g),\sqrt{\frac{\pi}{2}}M_o(g)\|\Sigma^{-1/2}\|_{op}\right\}\right).
\end{split}\end{equation*}
Finally, by Taylor's theorem and Lemma \ref{bounds},
$$|R|\le\frac{M_3(f)}{3}\big|X'-X\big|^2\big|\Lambda^{-1}(X'-X)\big|\le
\frac{1}{3}\|\Lambda^{-1}\|_{op}\big|X'-X\big|^3\left(\min\left\{\frac{1}{3}
M_3(g),\frac{\sqrt{2\pi}}{4}M_2(g)\|\Sigma^{-1/2}\|_{op}\right\}\right).$$
The first bound of the theorem results from choosing the first term from
each minimum; the second bound results from the second terms.

\end{proof}

\begin{thm}\label{inf-abstract}
Let $X$ be a random vector in $\R^d$ and, for each $\epsilon\in(0,1)$, 
suppose that $(X,X_\epsilon)$  is an exchangeable pair.  Suppose that
there is an invertible matrix $\Lambda$, a symmetric, non-negative definite
matrix $\Sigma$, a random vector $E$, a 
random matrix $E'$, and a deterministic function $s(\epsilon)$ such that
\begin{enumerate}
\item \label{inf-lincond}
$$\frac{1}{s(\epsilon)}\E\left[X'-X\big|X\right]\xrightarrow[\epsilon\to0]{L_1}-\Lambda X+\E\left[E\big|X\right]$$
\item \label{inf-quadcond}
$$\frac{1}{s(\epsilon)}\E\left[(X'-X)(X'-X)^T\big|X\right]\xrightarrow[\epsilon\to0]{L_1(\|\cdot\|_{H.S.})}2\Lambda\Sigma+\E\left[E'\big|X
\right].$$
\item \label{inf-tricond}
For each $\rho>0$, 
$$\lim_{\epsilon\to0}\frac{1}{s(\epsilon)}\E\left[|X_\epsilon-X|^2\I(|X_\epsilon-X|^2>\rho)\right]=0.$$
\end{enumerate}
Then for $g\in C^2(\R^d)$,
\begin{equation}\begin{split}\label{inf-bd1}
\big|\E g(X)-\E g(\Sigma^{1/2}Z)\big|&\le\|\Lambda^{-1}\|_{op}\left[
 M_1(g)\E|E|+\frac{1}{4}\widetilde{M}_2(g)
\E\|E'\|_{H.S.}\right]\\&\le\|\Lambda^{-1}\|_{op}\left[
 M_1(g)\E|E|+\frac{\sqrt{d}}{4}M_2(g)
\E\|E'\|_{H.S.}\right]
,\end{split}\end{equation}
where $Z$ is a standard Gaussian random vector in $\R^d$.

Also, if $\Sigma$ is non-singular,
\begin{equation}\begin{split}\label{inf-bd2}
d_W(X,\Sigma^{1/2}Z)\le \|\Lambda^{-1}\|_{op}\left[
\E|E|+\frac{1}{2}\|\Sigma^{-1/2}\|_{op}\E\|E'\|_{H.S.}
\right]
.\end{split}\end{equation}

\end{thm}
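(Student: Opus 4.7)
The plan is to follow the structure of the proof of Theorem \ref{abstract}, replacing the third-order Taylor expansion (whose remainder was bounded by $M_3(g)\,\E|X'-X|^3$) with a second-order expansion whose remainder is instead controlled via the Lindeberg-type condition \eqref{inf-tricond}. As in that proof, a smoothing-by-convolution argument---convolving $g$ with a centered Gaussian density $h_\delta$ of variance $\delta^2$ and using Young's inequality to get $M_1(g * h_\delta) \le M_1(g)$, $\widetilde{M}_2(g * h_\delta) \le \widetilde{M}_2(g)$, and $M_2(g * h_\delta) \le M_2(g)$---reduces the problem to the case $g \in C^\infty(\R^d)$ with bounded derivatives of all orders, since the final bound depends only on $M_1$ and $\widetilde{M}_2$ (or $M_2$). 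For such $g$, setting $f = U_og$, Lemma \ref{bounds}\eqref{genbd} gives $M_3(f) < \infty$, so $\hess f$ is Lipschitz on $\R^d$ and in particular uniformly continuous.

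For each $\epsilon$, the exchangeability of $(X,X_\epsilon)$ gives $0 = \frac{1}{2s(\epsilon)}\E\inprod{\Lambda^{-1}(X_\epsilon - X)}{\nabla f(X_\epsilon) + \nabla f(X)}$, which a second-order Taylor expansion of $\nabla f(X_\epsilon)$ about $X$ rewrites as
\begin{equation*}
0 = \E\left[\frac{1}{2s(\epsilon)}\inprod{\hess f(X)}{\Lambda^{-1}(X_\epsilon - X)(X_\epsilon - X)^T}_{H.S.} + \frac{1}{s(\epsilon)}\inprod{\Lambda^{-1}(X_\epsilon - X)}{\nabla f(X)} + \frac{R_\epsilon}{2s(\epsilon)}\right],
\end{equation*}
where $R_\epsilon = \inprod{\Lambda^{-1}(X_\epsilon - X)}{\int_0^1 [\hess f(X + t(X_\epsilon - X)) - \hess f(X)](X_\epsilon - X)\,dt}$. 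Since $\nabla f$ and $\hess f$ are bounded, the $L^1$ convergence in \eqref{inf-lincond} and \eqref{inf-quadcond} allows passage to the limit $\epsilon \to 0$ in the first two terms, producing $\E\inprod{\hess f(X)}{\Sigma + \frac{1}{2}\Lambda^{-1}E'}_{H.S.}$ and $-\E\inprod{X}{\nabla f(X)} + \E\inprod{\Lambda^{-1}E}{\nabla f(X)}$, respectively.

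The main obstacle is showing $\E|R_\epsilon|/s(\epsilon) \to 0$, which is where condition \eqref{inf-tricond} enters. Letting $\omega$ denote the modulus of continuity of $\hess f$ in operator norm, one has $|R_\epsilon| \le \|\Lambda^{-1}\|_{op}\,|X_\epsilon - X|^2\,\omega(|X_\epsilon - X|)$. For any $\rho > 0$, splitting according to whether $|X_\epsilon - X|^2 \le \rho$ or not gives
\begin{equation*}
\frac{\E|R_\epsilon|}{s(\epsilon)} \le \|\Lambda^{-1}\|_{op}\left(\omega(\sqrt{\rho}) \cdot \frac{\E|X_\epsilon - X|^2}{s(\epsilon)} + 2M_2(f) \cdot \frac{\E[|X_\epsilon - X|^2 \I(|X_\epsilon - X|^2 > \rho)]}{s(\epsilon)}\right).
\end{equation*}
The factor $\E|X_\epsilon - X|^2/s(\epsilon) = \tr(s(\epsilon)^{-1}\E[(X_\epsilon - X)(X_\epsilon - X)^T])$ is bounded in $\epsilon$ by \eqref{inf-quadcond}, and the second term vanishes as $\epsilon \to 0$ by \eqref{inf-tricond}; sending first $\epsilon \to 0$ and then $\rho \to 0$, and using the uniform continuity of $\hess f$, gives the claim.

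Taking $\epsilon \to 0$ in the displayed identity and applying Lemma \ref{char}\eqref{sol} to recognize $\E[\inprod{\hess f(X)}{\Sigma}_{H.S.} - \inprod{X}{\nabla f(X)}] = \E g(X) - \E g(\Sigma^{1/2}Z)$ yields
\begin{equation*}
\E g(X) - \E g(\Sigma^{1/2}Z) = -\E\inprod{\Lambda^{-1}E}{\nabla f(X)} - \frac{1}{2}\E\inprod{\hess f(X)}{\Lambda^{-1}E'}_{H.S.}.
\end{equation*}
Applying Lemma \ref{bounds}\eqref{genbd} and \eqref{Hessbd1} along with Cauchy-Schwarz and $\|AB\|_{H.S.} \le \|A\|_{op}\|B\|_{H.S.}$ gives the first inequality in \eqref{inf-bd1}, and the elementary bound $\widetilde{M}_2(g) \le \sqrt{d}\,M_2(g)$ gives the second. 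When $\Sigma$ is non-singular, using \eqref{gradbd} and \eqref{Hessbd2} in place of \eqref{genbd} and \eqref{Hessbd1} yields the Wasserstein bound \eqref{inf-bd2}. Finally, sending $\delta \to 0$ in the initial smoothing parameter extends the bounds from smooth $g$ to arbitrary $g \in C^2(\R^d)$.
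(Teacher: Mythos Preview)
Your argument is correct and follows the same route as the paper's proof: the same exchangeability identity, the same second-order Taylor expansion, and the same splitting of the remainder over $\{|X_\epsilon-X|^2\le\rho\}$ and its complement, with condition \eqref{inf-quadcond} controlling the first piece and condition \eqref{inf-tricond} the second. The only cosmetic difference is that you phrase the small-increment bound via the modulus of continuity $\omega$ of $\hess f$, whereas the paper uses the cruder $|R|\le K\|\Lambda^{-1}\|_{op}\min\{|X_\epsilon-X|^3,|X_\epsilon-X|^2\}$; after smoothing these are equivalent since $\omega(r)\le M_3(f)\,r$.

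One small correction: in deriving \eqref{inf-bd2} you cite Lemma \ref{bounds}\eqref{gradbd} for the gradient term, but that bound involves $M_0(g)=\|g\|_\infty$, which is not controlled for a merely Lipschitz $g$. The correct reference (and the one the paper uses) is \eqref{genbd} with $k=1$, giving $M_1(f)\le M_1(g)\le 1$; only the Hessian term requires the $\Sigma^{-1/2}$-dependent bound \eqref{Hessbd2}.
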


\begin{proof}
Fix $g$, and let
$U_og$ be as in Lemma \ref{char}.  As in the proof of Theorem \ref{abstract}, 
it suffices to assume that
$g\in C^\infty(\R^d)$.

For notational convenience, let $f=U_og$.  Beginning as before,

\begin{equation}\begin{split}\label{expansion}
0&=\frac{1}{2s(\epsilon)}\E\left[\inprod{\Lambda^{-1}(X_\epsilon-X)}{
\nabla f(X_\epsilon)+\nabla f(X)}\right]\\&=\frac{1}{s(\epsilon)}\E\left[
\frac{1}{2}[\inprod{\Lambda^{-1}(X_\epsilon-X)}{\nabla f(X_\epsilon)
-\nabla f(X)}+\inprod{\Lambda^{-1}(X_\epsilon-X)}{\nabla f(X)}\right]\\&=
\frac{1}{s(\epsilon)}E\left[\frac{1}{2}\inprod{\hess f(X)}{\Lambda^{-1}(
X_\epsilon-X)(X_\epsilon-X)^T}_{H.S.}+
\inprod{\Lambda^{-1}(X_\epsilon-X)}{\nabla f(X)}+\frac{R}{2}\right],
\end{split}\end{equation}
where $R$ is the error in the Taylor approximation.  

Now, by Taylor's theorem, there exists a real number $K$ depending on $f$,
such that
\begin{equation*}\begin{split}
|R|&\le K\min\big\{|X_\epsilon-X|^2|\Lambda^{-1}(X_\epsilon-X)|,
|X_\epsilon-X||\Lambda^{-1}(X_\epsilon-X)|\big\}\\&\le K\|\Lambda^{-1}\|_{op}
\min\big\{|X_\epsilon-X|^3,|X_\epsilon-X|^2\big\}\end{split}\end{equation*}
Breaking up the expectation over the sets on which $|X_\epsilon-X|^2$ is
larger and smaller than a fixed $\rho>0$,
 \begin{equation*}\begin{split}
\frac{1}{s(\epsilon)}\E\big|R\big|&\le\frac{K\|\Lambda^{-1}\|_{op}}{s(\epsilon)}
\E\Big[|X_\epsilon-X|^3\I(|X_\epsilon-X|\le\rho)+|X_\epsilon-X|^2\I(|X_\epsilon
-X|>\rho)\Big]\\&\le \frac{K\|\Lambda^{-1}\|_{op}\rho\E\big|X_\epsilon-X
\big|^2}{s(\epsilon)}+\frac{K\|\Lambda^{-1}\|_{op}}{s(\epsilon)}\E\Big[|X_\epsilon-X|^2\I(|X'-X|>\rho)\Big].
\end{split}\end{equation*}
The second term tends to zero as $\epsilon\to0$ by condition \ref{inf-tricond};
condition \ref{inf-quadcond} implies that the first is bounded by $CK\|
\Lambda^{-1}\|_{op}\rho$ for 
a constant $C$ depending on the 
distribution of $X$.  It follows that 
$$\lim_{\epsilon\to0}\frac{1}{s(\epsilon)}\E\big|R\big|=0.$$

For the rest of \eqref{expansion},
\begin{equation*}\begin{split}
\lim_{\epsilon\to0}&
\frac{1}{s(\epsilon)}\E\left[\frac{1}{2}\inprod{\hess f(X)}{\Lambda^{-1}(
X_\epsilon-X)(X_\epsilon-X)^T}_{H.S.}+
\inprod{\Lambda^{-1}(X_\epsilon-X)}{\nabla f(X)}\right]\\&=
\E\left[\inprod{\hess f(X)}{\Sigma}_{H.S.}-\inprod{X}{\nabla f(X)}
+\frac{1}{2}\inprod{\hess f(X)}{\Lambda^{-1}E'}_{H.S.}+\inprod{\nabla f(X)}{
\Lambda^{-1}E}\right],\end{split}\end{equation*}
where conditions \eqref{inf-lincond} and \eqref{inf-quadcond} together with
the boundedness of $\Hess f$ and $\nabla f$ have been used. 
That is (making use of the definition of $f$),
\begin{equation}\label{inf-erroreq}
\E g(X)-\E g(\Sigma^{1/2}Z)=\E\left[\frac{1}{2}\inprod{\hess f(X)}{\Lambda^{-1}
E'}_{H.S.}+\inprod{\nabla f(X)}{\Lambda^{-1}E}\right].
\end{equation}
As in the proof of Theorem \ref{abstract},
\begin{equation*}\begin{split}
\E\left|\frac{1}{2}\inprod{\hess f(X)}{\Lambda^{-1}E'
}_{H.S.}\right|&
\le\frac{1}{2}\|\Lambda^{-1}\|_{op}\|E'\|_{H.S.}\left(\min\left\{\frac{1}{2}\widetilde{M}_2(g),\sqrt{\frac{2}{\pi}}
M_1(g)\|\Sigma^{-1/2}\|_{op}\right\}\right),
\end{split}\end{equation*}
and
\begin{equation*}\begin{split}
\E\left|\inprod{\nabla f(X)}{\Lambda^{-1}E}\right|
&\le 
\|\Lambda^{-1}\|_{op}\E\left|E\right|M_1(g).
\end{split}\end{equation*}
This completes the proof.

\end{proof}

\medskip

\noindent{\bf Remarks:} 

\begin{enumerate}
\item 
Note that the condition 
$$\begin{array}{ll}{\rm (3')}\qquad
 \lim_{\epsilon\to0}\frac{1}{s(\epsilon)}\E\big|X_\epsilon-X
\big|^3=0,&\phantom{{\rm (3')}\qquad
 \lim_{\epsilon\to0}\frac{1}{s(\epsilon)}\E\big|X_\epsilon-X
\big|^3=0}\end{array}$$
is stronger than condition (3) of Theorem \ref{inf-abstract} and may be
used instead; this is what is done in the application given in Section 
\ref{examples}.

\item In \cite{ReiRol}, singular covariance matrices are treated by comparing
to a nearby non-singular covariance matrix rather than directly.  However, 
this is not necessary as all the proofs except those explicitly involving
$\Sigma^{-1/2}$ go through for non-negative definite $\Sigma$.
\end{enumerate}

\section{Examples}\label{examples}

\subsection{Runs on the line}
The following example was treated by Reinert and R\"ollin \cite{ReiRol} as
an example of the embedding method.  It should be emphasized that showing
that the number of $d$-runs on the line is asymptotically Gaussian seems
infeasible with Stein's original method of exchangeable pairs because of
the failure of condition \eqref{lin-cond-univ} from the introduction, but 
in \cite{ReiRol}, the random variable of interest is embedded in a
random vector whose components can be shown to be jointly Gaussian by 
making use of the more general condition \eqref{lin-cond-matrix} of the 
introduction.
The example is reworked here making use of the analysis of \cite{ReiRol}
together with
 Theorem \ref{abstract}, yielding an improved rate of convergence.

Let $X_1,\ldots,X_n$ be independent $\{0,1\}$-valued random variables, with
$\P(X_i=1)=p$ and $\P(X_i=0)=1-p$.  For $d\ge1$, define the (centered)
number of $d$-runs as 
$$V_d:=\sum_{m=1}^n(X_mX_{m+1}\cdots X_{m+d-1}-p^d),$$
assuming the torus convention, namely that $X_{n+k}=X_k$ for any $k$.  For
this example, we assume that $d<\frac{n}{2}$.
To make an exchangeable pair, $d-1$ sequential elements of $X:=(X_1,\ldots,X_n)$
are resampled.  That is, let $I$ be a uniformly distributed element of 
$\{1,\ldots,n\}$ and let $X_1',\ldots,X_n'$ be independent copies of the $X_i$.
Let $X'$ be constructed from $X$ by replacing $X_I,\ldots,X_{I+d-2}$ with
$X_I',\ldots,X_{I+d-2}'$.  Then $(X,X')$ is an exhangeable pair, and, 
defining $V_i':=V_i(X)$ for $i\ge 1$, it is easy to see that
\begin{equation}\begin{split}\label{Vlindiff}
V_i'-V_i=-&\sum_{m=I-i+1}^{I+d-2}X_m\cdots X_{m+i-1}+\sum_{m=I+d-i}^{I+d-2}X_m'
\cdots X_{I+d-2}'X_{I+d-1}\cdots X_{m+i-1}\\&+\sum_{m=I}^{I+d-i-1}X_m'\cdots 
X_{m+i-1}'+\sum_{m=I-i+1}^{I-1}X_m\cdots X_{I-1}X_I'\cdots X_{m+i-1}',
\end{split}\end{equation}
where sums $\sum_a^b$ are taken to be zero if $a>b$.  It follows that
$$\E\left[V_i'-V_i\big|X\right]=-\frac{1}{n}\left[(d+i-2)V_i-2\sum_{k=1}^{i-1}
p^{i-k}V_k\right].$$
Standard calculations show that, for $1\le j\le i\le d,$
\begin{equation}\begin{split}\label{Vcov}
\E\big[V_iV_j\big]&=n\left[(i-j+1)p^i+2\sum_{k=1}^{j-1}p^{i+j-k}-(i+j-1)
p^{i+j}\right]\\&=np^i(1-p)\sum_{k=0}^{j-1}(i-j+1+2k)p^k.
\end{split}\end{equation}
In particular, it follows from this expression that $np^i(1-p)\le\E V_i^2\le
np^i(1-p)i^2$, suggesting the renormalized random variables
\begin{equation}\label{Wdef}
W_i:=\frac{V_i}{\sqrt{np^i(1-p)}}.\end{equation}
It then follows from \eqref{Vcov} that, for $1\le i,j\le d$, 
\begin{equation}\label{Wcov}
\sigma_{ij}:=\E\big[W_iW_j\big]=p^{\frac{|i-j|}{2}}\sum_{k=0}^{i\wedge j-1}(|i-j|
+1+2k)p^k,
\end{equation}
and from \eqref{Vlindiff} that if $W:=(W_1,\ldots,W_d)$, then
$\E\left[W'-W\big|X\right]=\Lambda W,$ where 
$$\Lambda=\frac{1}{n}\begin{bmatrix}d-1\\ 
    -2p^{\frac{1}{2}}&d&&&0\\
    \vdots && \ddots&&&\\
    -2p^{\frac{k-1}{2}} & \cdots & -2p^{\frac{1}{2}} & d+k-2\\
    \vdots&&&&\ddots\\
    -2p^{\frac{d-1}{2}} &&\cdots && -2p^{\frac{1}{2}} & 2(d-1)
    \end{bmatrix}.$$
Condition \eqref{lincond} of Theorem \ref{abstract} thus applies with $E=0$ and 
$\Lambda$ as above.

To apply Theorem \ref{abstract}, an estimate on $\|\Lambda^{-1}\|_{op}$ is
needed.  Following Reinert and R\"ollin, we make use of known estimates of
condition numbers for triangular matrices (see, e.g., the survey
of Higham \cite{Hig}).  First, write $\Lambda=:\Lambda_E\Lambda_D$, where
$\Lambda_D$ is diagonal with the same diagonal entries as $\Lambda$ and 
$\Lambda_E$ is lower triangular with diagonal entries equal to one and 
$(\Lambda_E)_{ij}=\frac{\Lambda_{ij}}{\Lambda_{jj}}$ for $i>j$. Note that 
all non-diagonal entries of $\Lambda_E$ are bounded in absolute value by
$\frac{2\sqrt{p}}{d-1}$. From Lemeire \cite{Lem}, this implies the bounds
$$\|\Lambda_E^{-1}\|_1\le\left(1+\frac{2\sqrt{p}}{d-1}\right)^{d-1}\qquad 
\mbox{and}
\qquad\|\Lambda_E^{-1}\|_\infty\le\left(1+\frac{2\sqrt{p}}{d-1}\right)^{d-1}.$$
From Higham, $\|\Lambda_E^{-1}\|_{op}\le\sqrt{\|\Lambda_E^{-1}\|_1\|
\Lambda_E^{-1}\|_\infty},$
thus $$\|\Lambda_E^{-1}\|_{op}\le\left(1+\frac{2\sqrt{p}}{d-1}\right)^{d-1}.$$
Trivially, $\|\Lambda_D^{-1}\|_{op}=\frac{n}{d-1}$, and thus 
\begin{equation}\label{lambdabd}
\|\Lambda^{-1}\|_{op}\le\frac{n}{d-1}\left(1+\frac{2\sqrt{p}}{d-1}\right)^{d-1}
\le\frac{ne^{2\sqrt{p}}}{d-1}\le\frac{15n}{d}.
\end{equation}

Now observe that, if condition \eqref{lincond} of Theorem \ref{abstract} is
satisfied with $E=0$, then it follows that $\E\left[(W'-W)(W'-W)^T\right]=
2\Lambda\Sigma,$ and thus we may take $$E':=\E\left[(W'-W)(W'-W)^T
-2\Lambda\Sigma\big|W\right].$$
It follows that
\begin{equation*}\begin{split}
\E\|E'\|_{H.S.}&\le\sqrt{\sum_{i,j}\E(E'_{ij})^2}=\sqrt{\sum_{i,j}\var\left(
\E\left[(W_i'-W_i)(W_j'-W_j)\big|W\right]\right)}.
\end{split}\end{equation*}
It was determined by Reinert and R\"ollin that 
$$\var\left(
\E\left[(W_i'-W_i)(W_j'-W_j)\big|W\right]\right)\le \frac{96d^5}{n^3p^{2d}
(1-p)^2},$$
thus
$$\E\|E'\|_{H.S.}\le \frac{4\sqrt{6}d^{7/2}}{n^{3/2}p^d(1-p)}.$$

Finally, note that 
$$\E|W'-W|^3\le\sqrt{d}\sum_{i=1}^d\E\big|W_i'-W_i\big|^3.$$
Reinert and R\"ollin showed that
$$\E\big|(W_i'-W_i)(W_j'-W_j)(W_k'-W_k)\big|\le\frac{8d^3}{n^{3/2}p^{3d/2}
(1-p)^{3/2}}$$
for all $i,j,k$, thus
$$\E|W'-W|^3\le\frac{8d^{9/2}}{n^{3/2}p^{3d/2}(1-p)^{3/2}}.$$
Using these bounds in inequality \eqref{bd1} from Theorem \ref{abstract} 
yields the 
following.
\begin{thm}
For $W=(W_1,\ldots,W_d)$ defined as in \eqref{Wdef} with $d<\frac{n}{2}$, 
$\Sigma=\big[\sigma_{ij}\big]_{i,j=1}^d$ given by \eqref{Wcov}, and $h\in C^3(
\R^d)$,
\begin{equation}\label{runs}
\big|\E h(W)-\E h(\Sigma^{1/2}Z)\big|\le 
\left[\frac{15\sqrt{6}d^3M_2(h)}{p^d(1-p)\sqrt{n}}+\frac{40d^{7/2}M_3(h)}{3
p^{3d/2}(1-p)^{3/2}\sqrt{n}}\right],
\end{equation}
where $Z$ is a standard $d$-dimensional Gaussian random vector.
\end{thm}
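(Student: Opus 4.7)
The plan is to invoke inequality \eqref{bd1} of Theorem \ref{abstract} directly with $(X,X') = (W,W')$, since all the necessary ingredients have already been assembled in the discussion above. First I would record that the hypotheses of Theorem \ref{abstract} are in place: the exchangeability of $(W,W')$ is immediate from the resampling construction; condition \eqref{lincond} was verified with $E = 0$ and with the explicit triangular $\Lambda$; the matrix-valued error $E'$ was defined precisely so that condition \eqref{quadcond} holds identically; and $\Sigma = [\sigma_{ij}]$ is the actual covariance of $W$, hence symmetric and non-negative definite. Note also that $\Lambda$ is invertible, being lower triangular with nonzero diagonal entries $\frac{d-1}{n},\frac{d}{n},\ldots,\frac{2(d-1)}{n}$.

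Next, I would substitute the three quantitative estimates already established into the second form of \eqref{bd1}, namely
\[
\|\Lambda^{-1}\|_{op} \le \tfrac{15n}{d}, \qquad \E\|E'\|_{H.S.} \le \tfrac{4\sqrt{6}\,d^{7/2}}{n^{3/2}p^d(1-p)}, \qquad \E|W'-W|^3 \le \tfrac{8 d^{9/2}}{n^{3/2}p^{3d/2}(1-p)^{3/2}}.
\]
Because $E = 0$, the $M_1(h)$ term in \eqref{bd1} vanishes outright, leaving only the $M_2(h)$ and $M_3(h)$ contributions. Multiplying the three factors in the $M_2$ summand and the three factors in the $M_3$ summand (and canceling the $\sqrt{d}/4$ against the $4\sqrt{6}\,d^{7/2}$, and the $1/9$ against the $8d^{9/2}$) produces coefficients $\tfrac{15\sqrt{6}\,d^3}{p^d(1-p)\sqrt{n}}$ and $\tfrac{40\,d^{7/2}}{3p^{3d/2}(1-p)^{3/2}\sqrt{n}}$ respectively, matching the stated bound \eqref{runs} exactly.

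One small judgment call is which form of Theorem \ref{abstract} to invoke. I would use the coordinate-free bound \eqref{bd1} rather than \eqref{bd2}, for two reasons: first, it is precisely the use of the $M_r$ norms that achieves the improvement over \cite{ReiRol} advertised in the introduction, so this choice is forced by the whole point of the section; second, \eqref{bd2} would introduce a factor of $\|\Sigma^{-1/2}\|_{op}$, which for small $p$ and large $d$ is an unfavorable trade against the mild $\sqrt{d}$ factor appearing in \eqref{bd1}. There is no genuine obstacle remaining at this stage: the real labor is in the three inputs just listed, all of which have either been proved here (the estimate on $\|\Lambda^{-1}\|_{op}$ via the Lemeire and Higham bounds for triangular matrices) or cited from \cite{ReiRol} (the variance and third-moment estimates). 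The proof is thus essentially an arithmetic simplification.
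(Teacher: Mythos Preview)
Your proposal is correct and follows exactly the paper's own argument: verify the hypotheses of Theorem~\ref{abstract} with $E=0$, then substitute the bounds $\|\Lambda^{-1}\|_{op}\le 15n/d$, $\E\|E'\|_{H.S.}\le 4\sqrt{6}\,d^{7/2}/(n^{3/2}p^d(1-p))$, and $\E|W'-W|^3\le 8d^{9/2}/(n^{3/2}p^{3d/2}(1-p)^{3/2})$ into the second line of \eqref{bd1}. The arithmetic you describe is precisely what the paper does, and your choice of \eqref{bd1} over \eqref{bd2} matches as well.
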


\medskip

{\bf Remarks:} Compare this result to that obtained in \cite{ReiRol}:
\begin{equation}
 \big|\E h(W)-\E h(\Sigma^{1/2}Z)\big|\le 
\frac{37 d^{7/2}|h|_2}{p^{d}(1-p)\sqrt{n}}
    +\frac{10 d^5|h|_3}{p^{3d/2}(1-p)^{3/2}\sqrt{n}},\end{equation}
where $|h|_2=\sup_{i,j}\left\|\frac{\partial^2h}{\partial x_i\partial x_j}
\right\|_\infty$ and $|h|_3=\sup_{i,j,k}\left\|\frac{\partial^3h}{\partial x_i
\partial x_j\partial x_k}\right\|_{\infty}$.

\subsection{Eigenfunctions of the Laplacian}
Consider a compact Riemannian manifold $M$ with metric $g$.  Integration
with respect to the normalized volume measure is denoted $\dv$, thus
$\int_M1\dv=1.$
For coordinates $\left\{\frac{\partial}{\partial 
x_i}\right\}_{i=1}^n$ on $M$, define 
\begin{equation*}
(G(x))_{ij}=g_{ij}(x)=\inprod{\left.\frac{\partial}{\partial x_i}\right|_x}
{\left.\frac{\partial}{\partial x_j}\right|_x},\qquad g(x)=\det(G(x)),
\qquad g^{ij}(x)=(G^{-1}(x))_{ij}.
\end{equation*}
Define the gradient $\nabla f$ of $f:M\to\R$ and the Laplacian
$\Delta_g f$ of $f$ by
\begin{equation*}
\nabla f(x)=\sum_{j,k}\frac{\partial f}{\partial x_j}g^{jk}
\frac{\partial }{\partial x_k},\qquad\qquad \Delta_g f(x)
=\frac{1}{\sqrt{g}}\sum_{j,k}\frac{\partial}{\partial x_j}
\left(\sqrt{g}g^{jk}\frac{\partial f}{\partial x_k}\right).
\end{equation*}
The function $f:M\to\R$ is an eigenfunction of $\Delta$ with eigenvalue
$-\mu$ if $\Delta f(x)=-\mu f(x)$ for all $x\in M$; it is known
(see, e.g., \cite{Cha}) that on a compact Riemannian manifold $M$, the 
eigenvalues of $\Delta$ form a sequence $0\ge-\mu_1\ge-\mu_2\ge
\ldots\searrow-\infty$.  Eigenspaces associated to different eigenvalues
are orthogonal in $L_2(M)$ and all eigenfunctions of $\Delta$ are elements
of $C^\infty(M)$.

Let $X$ be a uniformly distributed random point of $M$.  The value 
distribution of a function $f$ on $M$ is the distribution (on $\R$) of
the random variable $f(X)$.
In \cite{Mec1}, a general bound was given for the total variation distance 
between the value distribution of an eigenfunction and a Gaussian distribution, 
in terms of the eigenvalue and the gradient of $f$.  The proof made use 
of a univariate version of Theorem \ref{inf-abstract}.  Essentially the same
analysis is used here to prove a multivariate version of that theorem.

Let $f_1,\ldots,f_k$ be a sequence of orthonormal (in $L_2$) eigenfunctions of
$\Delta$ with corresponding eigenvalues $-\mu_i$ (some of the $\mu_i$ may be
the same if the eigenspaces of $M$ have dimension greater than 1).
Define the random vector $W\in\R^k$ by $W_i:=f_i(X)$.  We will apply
Theorem \ref{inf-abstract} to show that $W$ is approximately
distributed as a standard Gaussian random vector (i.e., $\Sigma=I_k$).  

For $\epsilon>0$,
an exchangeable pair $(W,W_\epsilon)$ is constructed from $W$ as follows.  
Given $X$, choose an element $V\in S_XM$ (the unit sphere of the tangent
space to $M$ at $X$) according to the uniform measure on $S_XM$, and let 
$X_\epsilon=\exp_X(\epsilon V).$  That is, pick a direction at random, and
move a distance $\epsilon$ from $X$ along a geodesic in that direction.  It
was shown in \cite{Mec1} that this construction produces an exchangeable pair
of random points of $M$; it follows that if $W_\epsilon:=(f_1(X_\epsilon),
\ldots,f_k(X_\epsilon))$, then $(W,W_\epsilon)$ is an exchangeable pair
of random vectors in $\R^k$.

 In order to identify $\Lambda$, $E$ and $E'$ so as to apply 
Theorem \ref{inf-abstract}, first 
let $\gamma:[0,
\epsilon] \to M$ be a constant-speed geodesic such that $\gamma(0)=X$, 
$\gamma(\epsilon)=
X_\epsilon$, and $\gamma'(0)=V$.  
Then applying Taylor's theorem on $\R$ to
the function $f_i\circ\gamma$ yields
\begin{equation}\begin{split}\label{taylor}
f_i(X_\epsilon)-f_i(X)&=\epsilon\cdot\left.\frac{d(f_i\circ\gamma)}{dt}
\right|_{t=0}+\frac{\epsilon^2}{2}\cdot\left.\frac{d^2(f_i\circ\gamma)}{d
t^2}\right|_{t=0}+O(\epsilon^3)\\
&=\epsilon\cdot d_Xf_i(V)+\frac{\epsilon^2}{2}
\cdot\left.\frac{d^2(f_i\circ\gamma)}{dt^2}
\right|_{t=0}+O(\epsilon^3),
\end{split}\end{equation}  
where the coefficient implicit in the $O(\epsilon^3)$ depends on $f_i$ and 
$\gamma$ and $d_xf_i$ denotes the differential of $f_i$ at $x$.  Recall that
$d_xf_i(v)=\inprod{\nabla f_i(x)}{v}$ for $v\in T_xM$ and the gradient $
\nabla f_i(x)$ defined as above.    
Now, for $X$ fixed, 
$V$ is distributed according to normalized Lebesgue measure on
$S_XM$ and $d_Xf_i$ is a linear functional on $T_XM$.  It follows that
$$\E\left[d_Xf_i(V)\big|X\right]=\E\left[d_Xf_i(-V)\big|X\right]=
-\E\left[d_Xf_i(V)\big|X\right],$$
thus $\E\left[d_Xf_i(V)\big|X\right]=0.$
This implies that
$$\lim_{\epsilon\to0}\frac{1}{\epsilon^2}\E\big[f_i(X_\epsilon)-f_i(X)\big|
X\big]$$
exists and is finite; we will take $s(\epsilon)=\epsilon^2.$
Indeed, it is well-known (see, e.g., Theorem 11.12 of \cite{gray}) that 
\begin{equation}\label{mean-value}
\lim_{\epsilon\to0}\frac{1}{\epsilon^2}\E\big[f_i(X_\epsilon)
-f_i(X)\big|X\big]=\frac{1}{2n}\Delta_gf_i(X)=\frac{-\mu_i}{2n}f_i(X)
\end{equation}
for $n=dim(M).$
It follows that
$\Lambda = \frac{1}{2n}diag(\mu_1,\ldots,\mu_k)$ and
$E'=0.$  
The expression $\E\left[W_\epsilon-W\big|W\right]$
satisfies the $L_1$ convergence requirement of Theorem \ref{inf-abstract}, since the 
$f_i$ are necessarily smooth and $M$ is compact.  Furthermore, it is 
immediate that $\|\Lambda^{-1}\|_{op}=2n\max_{1\le i\le k}\left(\frac{1}{\mu_i}
\right).$

For the second condition of Theorem \ref{inf-abstract}, it is necessary 
to determine
$$\lim_{\epsilon\to0}\frac{1}{\epsilon^2}\E\big[(W_\epsilon-W)_i(W_\epsilon-
W)_j\big|X\big]=\lim_{\epsilon\to0}\frac{1}{\epsilon^2}\E\big[(f_i(X_\epsilon)-
f_i(X))(f_j(X_\epsilon)-f_j(X))\big|X\big].$$
By the expansion (\ref{taylor}),
\begin{equation*}
\E\left[(f_i(X_\epsilon)-f_i(X))(f_j(X_\epsilon)-f_j(X))\big|X
\right]=\epsilon^2\E\left[(d_Xf_i(V))(d_Xf_j(V))\big|X\right]+O(\epsilon^3).
\end{equation*}
Choose coordinates $\left\{\frac{\partial}{\partial x_i}\right\}_{i=1}^n$
in a neighborhood of $X$ which are orthonormal at $X$.  Then
$$\nabla f(X)=\sum_i\frac{\partial f}{\partial x_i}\frac{\partial}
{\partial x_i},$$
for any function $f\in C^1(M)$, thus
\begin{equation*}\begin{split}
(d_xf_i(v))\cdot(d_xf_j(v))&=\inprod{\nabla f_i}{v}\inprod{\nabla f_j}{v}\\
&=\sum_{r=1}^n\frac{\partial f_i}{\partial x_r}(x)\frac{
\partial f_j}{\partial x_r}(x)v_r^2+
\sum_{r\neq s}\frac{\partial f_i}{\partial x_r}(x)\frac{\partial f_j}{
\partial x_s}(x)v_rv_s.
\end{split}\end{equation*}
Since $V$ is uniformly distributed on a Euclidean sphere, 
$\E[V_rV_s]=\frac{1}{n}\delta_{rs}$.   Making use of this fact yields
$$\lim_{\epsilon\to0}\frac{1}{\epsilon^2}\E\left[(d_Xf_i(V))(d_Xf_j(V))
\big|X\right]=
\frac{1}{n}\inprod{\nabla f_i(X)}{\nabla f_j(X)},$$
thus condition (2) is satisfied with 
$$E'=\frac{1}{n}\Big[\inprod{\nabla f_i(X)}{\nabla f_j(X)}\Big]_{i,j=1}^k-
2\Lambda.$$
(As before, the convergence requirement is satisfied since the $f_i$ are
smooth and $M$ is compact.)

By Stokes' theorem,
\begin{equation*}\begin{split}
\E\inprod{\nabla f_i(X)}{\nabla f_j(X)}&=-\E\big[f_i(X)\Delta_g f_j(X)\big]=
\mu_j\E\big[f_i(X)f_j(X)\big]=\mu_i\delta_{ij},
\end{split}\end{equation*}
thus
\begin{equation*}\begin{split}
\E\|E'\|_{H.S.}&=\frac{1}{n}\E\sqrt{\sum_{i,j=1}^k\Big[\inprod{
\nabla f_i(X)}{\nabla f_j(X)}-\E\inprod{\nabla f_i(X)}{\nabla f_j(X)}\Big]}
\end{split}\end{equation*}

Finally, (\ref{taylor}) 
gives immediately that 
$$\E\left[|W_\epsilon-W|^3\big|W\right]
=O(\epsilon^3),$$
(where the implicit constants depend on the $f_i$ and on $k$), thus condition
\eqref{inf-tricond} of Theorem \ref{inf-abstract} is satisfied.

All together, we have proved the following.

\begin{thm}\label{eigenfunctions}
Let $M$ be a compact Riemannian manifold and $f_1,\ldots,f_k$ an orthonormal
(in $L_2(M)$) sequence of eigenfunctions of the Laplacian on $M$, with 
corresponding eigenvalues $-\mu_i$.  Let
$X$ be a uniformly distributed random point of $M$.  Then if $W:=(f_1(X),
\ldots,f_k(X))$, 
$$d_W(W,Z)\le\left[\max_{1\le i\le k}\left(\frac{1}{\mu_i}\right)\right]
\E\sqrt{\sum_{i,j=1}^k\Big[\inprod{\nabla f_i(X)}{\nabla f_j(X)}-
\E\inprod{\nabla f_i(X)}{\nabla f_j(X)}\Big]}.$$
\end{thm}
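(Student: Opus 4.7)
\textbf{Proof proposal for Theorem \ref{eigenfunctions}.}

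The plan is to apply Theorem \ref{inf-abstract} with $\Sigma = I_k$ and $s(\epsilon)=\epsilon^2$ to the exchangeable pair $(W, W_\epsilon)$ obtained from the geodesic walk $X_\epsilon = \exp_X(\epsilon V)$, where $V$ is sampled uniformly on $S_XM$. The target identifications, announced in the discussion preceding the theorem, are $\Lambda = \frac{1}{2n}\mathrm{diag}(\mu_1,\ldots,\mu_k)$, $E=0$, and $E' = \frac{1}{n}\big[\inprod{\nabla f_i(X)}{\nabla f_j(X)}\big]_{i,j=1}^k - 2\Lambda$. An early sanity check, which also pins down the normalization, is Stokes' theorem: $\E\inprod{\nabla f_i(X)}{\nabla f_j(X)} = -\E[f_i(X)\Delta_g f_j(X)] = \mu_j\delta_{ij}$, so $\E E'=0$ and the standard Gaussian is indeed the correct target.

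To verify the conditions of Theorem \ref{inf-abstract}, I would Taylor-expand $f_i$ along the geodesic $\gamma$ with $\gamma(0)=X$, $\gamma'(0)=V$ as in \eqref{taylor}. The antipodal symmetry of $V$ on $S_XM$ kills $\E[d_Xf_i(V)\mid X]$, so the Riemannian mean-value formula \eqref{mean-value} (Theorem 11.12 of \cite{gray}) gives $\epsilon^{-2}\E[W_\epsilon - W\mid X]\to -\Lambda W$ in $L_1$, which is condition \eqref{inf-lincond} with $E=0$. For condition \eqref{inf-quadcond}, I would choose coordinates orthonormal at $X$ and invoke the round-sphere identity $\E[V_rV_s\mid X]=n^{-1}\delta_{rs}$; the second-order term of \eqref{taylor} then yields
$$\lim_{\epsilon\to0}\frac{1}{\epsilon^2}\E\big[(W_\epsilon-W)(W_\epsilon-W)^T\big|X\big] = \frac{1}{n}\big[\inprod{\nabla f_i(X)}{\nabla f_j(X)}\big]_{i,j=1}^k$$
in $L_1(\|\cdot\|_{H.S.})$, with the requisite $L_1$ convergence automatic from the uniform $O(\epsilon^3)$ control on the Taylor remainder afforded by compactness of $M$ and smoothness of the $f_i$. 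The same bound handles condition \eqref{inf-tricond}: the strengthened condition $(3')$ from the Remark following Theorem \ref{inf-abstract} is in force since $|W_\epsilon - W|^3 = O(\epsilon^3)$ uniformly.

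With the three hypotheses in place I apply \eqref{inf-bd2}: $\|\Lambda^{-1}\|_{op} = 2n\max_i \mu_i^{-1}$, $\|\Sigma^{-1/2}\|_{op}=1$, and
$$\E\|E'\|_{H.S.} = \frac{1}{n}\E\sqrt{\sum_{i,j=1}^k\big[\inprod{\nabla f_i(X)}{\nabla f_j(X)} - \E\inprod{\nabla f_i(X)}{\nabla f_j(X)}\big]^2},$$
so the three numerical factors $2n$, $1/2$, and $1/n$ collapse to $1$ and deliver the stated inequality. The main obstacle I anticipate is not any single computation but the careful bookkeeping of three disparate normalizations --- the Riemannian mean-value constant $1/(2n)$, the round-sphere covariance $n^{-1}\delta_{rs}$, and the coefficient $1/2$ in \eqref{inf-bd2} --- and the Stokes' theorem identity above is the key coherence check that these three fit together.
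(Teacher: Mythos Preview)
Your proposal is correct and follows essentially the same route as the paper's own argument: the geodesic exchangeable pair, the Taylor expansion \eqref{taylor} combined with the antipodal symmetry of $V$ and the Riemannian mean-value formula for condition \eqref{inf-lincond}, the round-sphere identity $\E[V_rV_s\mid X]=n^{-1}\delta_{rs}$ for condition \eqref{inf-quadcond}, the uniform $O(\epsilon^3)$ remainder for condition $(3')$, and finally Stokes' theorem together with \eqref{inf-bd2}. Your explicit tracking of how the constants $2n$, $1/2$, and $1/n$ cancel is a nice clarification, and your inclusion of the square inside the square root in the expression for $\E\|E'\|_{H.S.}$ silently corrects what appears to be a typographical omission in the paper.
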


\medskip

{\bf Example:} The torus.

In this example, Theorem \ref{eigenfunctions} is applied to 
the value distributions of eigenfunctions
on flat tori.  The class of functions considered here are random functions;
that is, they are linear combinations of eigenfunctions with random
coefficients.

Let $(M,g)$ be the torus $\T^n=\R^n/\Z^n$, with the metric
given by the symmetric positive-definite bilinear form $B$: 
$$(x,y)_B=\inprod{Bx}{y}.$$  With this metric, the Laplacian $\Delta_B$
on $\T^n$ is given by
$$\Delta_Bf(x)=\sum_{j,k}(B^{-1})_{jk}\frac{\partial^2f}{\partial x_j\partial
x_k}(x).$$
Eigenfunctions of $\Delta_B$ are given by the real and imaginary parts
of functions of the form 
$$f_v(x)=e^{2\pi i\inprod{v}{x}_B}=e^{2\pi i\inprod{Bv}{x}},$$
for vectors $v\in\R^n$ such that $Bv$ has integer components, with
corresponding eigenvalue $-\mu_v=-(2\pi \|v\|_B)^2.$

Consider a collection of $k$ random eigenfunctions $\{f_j\}_{j=1}^k$ 
of $\Delta_B$ on the torus which
are linear combinations of eigenfunctions with random
coefficients:
$$f_j(x):=\Re\left(\sum_{v\in\V_j}a_ve^{2\pi i\inprod{Bv}{x}}\right),$$
where $\V_j$ is a finite collection of vectors $v$ such that 
$Bv$ has integer components and $\inprod{v}{Bv}=\frac{\mu_j}{(2\pi)^2}$ 
for each $v\in\V_j$, and
$\{\{a_v\}_{v\in\V_j}:1\le j\le k\}$ are $k$ independent random vectors (indexed by $j$)
on the spheres of radius
$\sqrt{2}$ in $\R^{|\V_j|}$.  Assume that $v+w\neq 0$ for $v\in\V_r$ and $w\in
\V_s$ ($r$ and $s$ may be equal) and that
$\V_r\cap\V_s=\emptyset$ for $r\neq s$; it follows easily that the $f_j$
are orthonormal in $L_2(\T^n)$.

To apply Theorem \ref{eigenfunctions}, first note that   
\begin{equation*}\begin{split}
\nabla_Bf_r(x)&=\left\{\Re\left(\sum_{j=1}^n\sum_{v\in\V_r}(2\pi i)a_v(Bv)_j
(B^{-1})_{j\ell}e^{2\pi i\inprod{Bv}{x}}\right)\right\}_{\ell=1}^n\\
&=-\Im\left(\sum_{v\in\V_r}(2\pi)a_ve^{2\pi i\inprod{Bv}{x}}v\right),
\end{split}\end{equation*}
using the fact that $B$ is symmetric.

It follows that
\begin{equation}\begin{split}
\inprod{\nabla_Bf_r(x)}{\nabla_Bf_s(x)}_B&=\sum_{j,\ell=1}^n
B_{j\ell}\Im\left(\sum_{v\in\V_r}(2\pi)a_v
e^{2\pi i\inprod{Bv}{x}}v_j\right)\Im\left(\sum_{w\in\V_s}(2\pi)a_we^{2\pi i
\inprod{Bw}{x}}w_\ell\right)\\&=\frac{1}{2}\Re\left[\sum_{\substack{
v\in\V_r\\w\in\V_s}}4\pi^2a_va_w\inprod{v}{w}_B\left(e^{2\pi i\inprod{Bv-Bw}{x}}-
e^{2\pi i\inprod{Bv+Bw}{x}}\right)\right].\label{grad2}
\end{split}\end{equation}
Let $X$ be a randomly distributed point on the torus.  Let $\E_a$ denote 
averaging over the coefficients $a_v$ and $\E_X$ denote averaging over the
random point $X$.
To estimate $\E_a d_W(W,Z)$ from Theorem \ref{eigenfunctions}, first apply the 
Cauchy-Schwartz inequality and then change the order of integration:
\begin{equation*}\begin{split}
\E_a\E_X&\sqrt{\sum_{i,j=1}^k\big[\inprod{\nabla f_i(X)}{\nabla f_j(X)}_B-
\E_X\inprod{\nabla f_i(X)}{\nabla f_j(X)}_B\big]}\\&\le
\sqrt{\sum_{i,j=1}^k\E_X\E_a\big[\inprod{\nabla f_i(X)}{\nabla f_j(X)}_B-
\E_X\inprod{\nabla f_i(X)}{\nabla f_j(X)}_B\big]^2}.\end{split}\end{equation*}
Start by computing $\E_X\E_a\inprod{\nabla_Bf_r(X)}{\nabla_Bf_s(X)}^2_B.$  
From above,
\begin{equation*}\begin{split}
&\hspace{-.3in}
\inprod{\nabla_Bf_r(x)}{\nabla_Bf_s(X)}_B^2\\&=2\pi^4\Re\left[\sum_{\substack{
v,v'\in\V_r\\w,w'\in\V_s}}a_va_wa_{v'}a_{w'}
\inprod{v}{w}_B\inprod{v'}{w'}_B\right.\\&
\qquad\qquad\qquad\Big[e^{2\pi i\inprod{Bv-Bw-Bv'+Bw'}{x}}-e^{2\pi i\inprod{
Bv-Bw-Bv'-Bw'}{x}}+e^{2\pi i\inprod{Bv-Bw+Bv'-Bw'}{x}}\\&
\qquad\qquad\qquad-e^{2\pi i\inprod{
Bv-Bw+Bv'+Bw'}{x}}-e^{2\pi i\inprod{Bv+Bw-Bv'+Bw'}{x}}+e^{2\pi i\inprod{
Bv+Bw-Bv'-Bw'}{x}}\\&\left.\qquad\qquad\qquad\phantom{\sum_v}
-e^{2\pi i\inprod{Bv+Bw+Bv'-Bw'}{x}}+e^{2\pi i\inprod{
Bv+Bw+Bv'+Bw'}{x}}\Big]\right].
\end{split}\end{equation*}
Averaging over the coefficients $\{a_v\}$ using standard techniques (see
Folland \cite{Fol} for general formulae and \cite{Mec1} for a detailed 
explanation of the univariate version of this result), and then over
the random point $X\in\T^n$, it is not hard to show that

\begin{equation*}\begin{split}
\E_X\E_a\|\nabla_Bf_r(X)\|_B^4&=\frac{8\pi^4}{|\V_r|(|\V_r|+2)}\left[3
\sum_{v\in\V_r}\|v\|_B^4+2\left(\sum_{v\in\V_r}\|v\|_B^2\right)^2+4
\sum_{v,w\in\V_r}\inprod{v}{w}_B^2\right],
\end{split}\end{equation*}
and 
\begin{equation*}
\E_X\E_a\inprod{\nabla_Bf_r(X)}{\nabla_Bf_s(X)}_B^2=
\frac{4\pi^4}{|\V_r||\V_s|}\sum_{\substack{v\in\V_r\\w\in\V_s}}\inprod{v}{w}_B^2.
\end{equation*}

Now,
\begin{equation*}\begin{split}
\E_a\Big[\E_X\|\nabla_Bf_r(X)\|_B^2\Big]^2&=
\E_a\left[2\pi^2\sum_{v\in\V_r}a_v^2\|v\|_B^2\right]^2=\frac{(2\pi)^4}{|\V_r|(|
\V_r|+2)}\left[\left(\sum_{v\in\V_r}\|v\|_B^2\right)^2+2\sum_{v\in\V_r}\|v\|_B^4
\right],
\end{split}\end{equation*}
and
$$\E_X\inprod{\nabla_B f_r(X)}{\nabla_B f_s(X)}_B=0$$
for $r\neq s$.
It follows that 
\begin{equation*}\begin{split}
\E_X\E_a\|\nabla_Bf_r(X)\|_B^4-\E_a\left(\E_X\|\nabla_Bf_r(X)\|_B^2\right)^2
\le\frac{2(2\pi)^4}{|\V_r|(|\V_r|+2)}\sum_{v,w\in\V_r}\inprod{v}{w}_B^2,
\end{split}\end{equation*}
and, applying Theorem \ref{eigenfunctions}, we have shown that 
\begin{thm}\label{torus}
Let the random orthonormal set of functions $\{f_r\}_{r=1}^k$ be defined on 
$\T^n$ as above, and let the random vector $W$ be defined by $W_i:=
f_i(X)$  for $X$ a random point of $\T^n$.  Then 
$$\E_ad_W(W,Z)\le\frac{4\pi^2}{\min_r \mu_r}\sqrt{\sum_{r,s=1}^k\left(\frac{2}{|\V_r||\V_s|}\sum_{
\substack{v\in\V_r\\w\in\V_s}}\inprod{v}{w}_B^2\right)}.$$
\end{thm}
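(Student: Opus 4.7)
The plan is to apply Theorem \ref{eigenfunctions} pointwise in the random coefficients $\{a_v\}$ and then take the expectation $\E_a$. Since for each realization of the $a_v$ the functions $\{f_r\}_{r=1}^k$ form an orthonormal family of eigenfunctions of $\Delta_B$ with eigenvalues $-\mu_r$, Theorem \ref{eigenfunctions} gives
$$d_W(W,Z) \le \frac{1}{\min_r \mu_r}\,\E_X\sqrt{\sum_{r,s=1}^k\bigl(\inprod{\nabla_B f_r(X)}{\nabla_B f_s(X)}_B - \E_X\inprod{\nabla_B f_r(X)}{\nabla_B f_s(X)}_B\bigr)^2}.$$
Taking $\E_a$, moving it inside the square root by the Cauchy-Schwartz inequality, and exchanging $\E_a$ and $\E_X$ via Fubini produces
$$\E_a\,d_W(W,Z) \le \frac{1}{\min_r \mu_r}\sqrt{\sum_{r,s=1}^k\E_X\E_a\bigl(\inprod{\nabla_B f_r(X)}{\nabla_B f_s(X)}_B - \E_X\inprod{\nabla_B f_r(X)}{\nabla_B f_s(X)}_B\bigr)^2}.$$

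I would then split the $(r,s)$-sum into diagonal ($r=s$) and off-diagonal ($r\ne s$) pieces.  Stokes' theorem together with the orthonormality of the $f_j$ gives $\E_X\inprod{\nabla_B f_r(X)}{\nabla_B f_s(X)}_B = \mu_r\delta_{rs}$, so the off-diagonal contributions collapse to the pure second moment $\E_X\E_a\inprod{\nabla_B f_r(X)}{\nabla_B f_s(X)}_B^2$, while each diagonal contribution becomes the variance $\E_X\E_a\|\nabla_B f_r(X)\|_B^4 - \E_a(\E_X\|\nabla_B f_r(X)\|_B^2)^2$.  Both families of moments can be evaluated by starting from the explicit expansion \eqref{grad2} for $\inprod{\nabla_B f_r(x)}{\nabla_B f_s(x)}_B$, squaring, carrying out $\E_a$ via the standard moment formulas for uniform measure on a Euclidean sphere (e.g.\ Folland \cite{Fol}), and then computing $\E_X$ through orthogonality of the characters $x\mapsto e^{2\pi i\inprod{Bv}{x}}$ on $\T^n$; the hypotheses $v+w\ne 0$ and $\V_r\cap\V_s=\emptyset$ for $r\ne s$ are exactly what is needed to eliminate the would-be spurious matchings.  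The resulting closed forms are precisely the identities for $\E_X\E_a\|\nabla_B f_r(X)\|_B^4$, $\E_a(\E_X\|\nabla_B f_r(X)\|_B^2)^2$, and $\E_X\E_a\inprod{\nabla_B f_r(X)}{\nabla_B f_s(X)}_B^2$ recorded in the text preceding the theorem.

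Combining those identities with the elementary estimates $|\V_r|(|\V_r|+2)\ge|\V_r|^2$ (on the diagonal) and $4\pi^4\le 32\pi^4$ (off-diagonal), every $(r,s)$-summand is at most $\frac{2(2\pi)^4}{|\V_r||\V_s|}\sum_{v\in\V_r,w\in\V_s}\inprod{v}{w}_B^2$; factoring $16\pi^4=(4\pi^2)^2$ out from under the square root then yields the stated inequality.  The main technical obstacle is the moment computation itself: the expansion of the squared inner product contains eight oscillating exponentials, and one has to track which of them survive after the coefficient average (which forces pairings $\{v,w\}=\{v',w'\}$ as multisets) and after the integration over $X\in\T^n$ (which requires the net frequency $Bv\pm Bw\pm Bv'\pm Bw'$ to vanish in $\Z^n$).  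The non-degeneracy hypotheses on the $\V_j$ are exactly what removes the cross terms that would otherwise spoil the rate.
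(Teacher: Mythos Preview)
Your proposal is correct and follows essentially the same route as the paper: apply Theorem \ref{eigenfunctions} for each realization of the coefficients, push $\E_a\E_X$ inside the square root via Cauchy--Schwarz, and then evaluate the resulting second moments by expanding \eqref{grad2}, averaging over the sphere coefficients, and integrating over $\T^n$. The only places you are slightly more explicit than the paper are the final numerical relaxations $|\V_r|(|\V_r|+2)\ge|\V_r|^2$ on the diagonal and $4\pi^4\le 2(2\pi)^4$ off the diagonal, which the paper leaves implicit in passing from the displayed moment identities to the stated bound.
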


\medskip

{\bf Remarks:} Note that if the elements of $\cup_{r=1}^k \V_r$ are mutually
orthogonal, then the right-hand side becomes
$$\frac{4\pi^4}{\min_r \mu_r}\sqrt{\sum_{r=1}^k\frac{2\mu_r}{|\V_r|^2}},$$
thus if it is possible to choose the $\V_r$ such that their sizes are 
large for large $n$, and the range of the $\mu_r$ is not too big, the 
error is small.  One can thus find vectors of orthonormal eigenfunctions of
$\T^n$ which are jointly Gaussian (and independent) in the limit as the 
dimension tends to infinity, if the
matrix $B$ is such that there are large collections of vectors $v$
which are ``close to orthogonal'' and have the same lengths
with respect to $\inprod{\cdot}{\cdot}_B$ and with 
the vectors $Bv$ having integer components.  It is possible to extend the
analysis here, in a fairly straightfoward manner, to require rather less of the
matrix $B$ (essentially all the conditions here can be allowed to hold 
only approximately), but for simplicity's sake, we include only this 
most basic version here.  The univariate version of this relaxing of 
conditions is carried out in detail in \cite{Mec1}.

\medskip

\noindent{\bf Acknowledgements. }The author thanks M.\ Meckes for many useful 
discussions.  This research was supported by an American Institute of 
Mathematics five-year fellowship.

\bibliographystyle{plain}
\bibliography{multi-survey}

\end{document}